\documentclass{article}
\usepackage[utf8]{inputenc}
\usepackage{amsmath}
\usepackage{amssymb}
\usepackage{tikz}
\usepackage{graphicx}
\usepackage{float}
\usepackage{framed}
\usepackage[hang,flushmargin]{footmisc}
\usepackage{comment}
\usepackage{ytableau}
\usepackage[nottoc,numbib]{tocbibind}

\usepackage{amsthm}
\newtheorem{theorem}{Theorem}[section]

\newtheorem{lemma}[theorem]{Lemma}
\newtheorem{definition}[theorem]{Definition}
\newtheorem{proposition}[theorem]{Proposition}

\newtheorem{remark}[theorem]{Remark}

\usepackage{scalerel,stackengine}
\stackMath
\newcommand\reallywidehat[1]{%
\savestack{\tmpbox}{\stretchto{%
  \scaleto{%
    \scalerel*[\widthof{\ensuremath{#1}}]{\kern-.6pt\bigwedge\kern-.6pt}%
    {\rule[-\textheight/2]{1ex}{\textheight}}%WIDTH-LIMITED BIG WEDGE
  }{\textheight}% 
}{0.5ex}}%
\stackon[1pt]{#1}{\tmpbox}%
}
\parskip 1ex

\title{Limit formulas for the trace of the functional calculus of quantum channels for $SU(2)$}
\author{Robin van Haastrecht}
\date{}

\newcommand{\nm}[1]{ || #1 || }
\newcommand{\C}{\mathbb{C}}
\newcommand{\R}{\mathbb{R}}
\newcommand{\N}{\mathbb{N}}
\newcommand{\Hc}{\mathcal{H}}
\newcommand{\li}[1]{\overline{ #1}}
\newcommand{\Tr}{\mathrm{Tr}}

\begin{document}

\maketitle

\begin{abstract}
Lieb and Solovej \cite{liebsolBloch} studied traces of quantum channels, defined by the leading component in the decomposition of the tensor product of two irreducible representations of $SU(2)$, to establish a Wehrl-type inequality for integrals of convex functions of matrix coefficients. It is proved that the integral is the limit of the trace of the functional calculus of quantum channels. In this paper, we introduce new quantum channels for all the components in the tensor product and generalize their limit formula. We prove that the limit can be expressed using Berezin transforms.
\end{abstract}

\section{Introduction}

The study of estimates of matrix coefficients of unitary representations of Lie groups is of fundamental interest. The case of Schrödinger representations of the Heisenberg group $\C^n \rtimes \R$ has been studied extensively, see \cite{lieb, wehrl}. Later in \cite{liebsolBloch} Lieb and Solovej proved certain Wehrl-type $L^2$-$L^p$-estimates for matrix coefficients of $SU(2)$ representations. To prove this they introduce quantum channels. The two main ingredients of their proof are firstly finding inequalities on partial sums of eigenvalues of the quantum channels, and secondly a limit of the trace of quantum channels. More precisely, let $\Hc_{\mu}$ be the irreducible $(\mu + 1)$-dimensional representation of $SU(2)$ and consider the tensor product decomposition of two irreducible representations of $SU(2)$ \cite[appendix C]{hall}

\begin{equation}
\label{decompintro}
\Hc_{\mu} \otimes \Hc_{\nu} \cong \bigoplus_{k=0}^{\mu} \Hc_{\mu + \nu - 2k}.
\end{equation}
Lieb and Solovej \cite{liebsolBloch} define:
%$$$ \int_{SU(2)} f( \langle g \cdot v, A(g \cdot v) \rangle) d g,$$
%where $A$ is a positive operator of unit trace for $v \in \Hc_{\mu}$ a highest weight vector in a representation of $SU(2)$. In particular, they proved that for $f$ concave it has a minimum when $A = P_v$, the projection onto $v$. They studied this using the limit of a quantum channeling procedure, which looks like
$$ \mathcal{T}^{\nu}(A) = P (A \otimes I_{\nu}) P^* \in B(\Hc_{\mu + \nu}).$$
Here $I_{\nu}$ is the identity operator on $\Hc_{\nu}$ and
$$P: \Hc_{\mu} \otimes \Hc_{\nu} \rightarrow \Hc_{\mu + \nu}$$
is a partial isometry. The map $\mathcal{T}^{\nu}$ is
trace-preserving up to a constant and thus a quantum channel up to this constant. Roughly speaking, it is proved that the integral of functions of matrix coefficients
$$ \int_{SU(2)} \phi( |\langle g \cdot u, v_{\mu} \rangle|^2|) dg $$
is the limit of the trace of the functional calculus for $A = u \otimes u^*$ \cite{liebsolBloch}. Here $v_{\mu}$ is a highest weight vector.
%This integral can also be written using the symbol of the operator, $R_{\mu}$, defined in definition \ref{symboldef}, as $R_{\mu}$ is surjective onto $B(\Hc_{\mu})$.

In this paper we introduce general quantum channels, defined by projecting onto the irreducible $k$ component of our decomposition (\ref{decompintro}). We define

$$ \mathcal{T}_{\mu,k}^{\nu}(A) = P_k ( A \otimes I_{\nu} ) P_k^*.$$
Again, the map is trace-preserving up to a constant. It is automatically completely positive. We will study the limit formula of the trace of the functional calculus. It will turn out that the Berezin transform and the Toeplitz operator, which is equal to $(\mu + 1) R_{\mu}^*$, where $R_{\mu}$ is the symbol defined in definition \ref{symboldef}, will be useful to study the limit. We obtain the following Theorem.

\begin{theorem}
Let $\phi \in C([0,1])$. Then
$$ \lim_{\nu \rightarrow \infty} \frac{1}{\nu} \Tr(\phi(\mathcal{T}_{\mu,k}^{\nu}(R_{\mu}^*(f)))) = \int_{\C} \phi(E_{\mu,k}(f)) d \iota(z),$$
for any $f \in C(\mathbb{CP}^1)$ such that $\int_{\mathbb{CP}^1} f(z) dz = 1$ and with Toeplitz operator $R_{\mu}^*(f) \geq 0$.
\end{theorem}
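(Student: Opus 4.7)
My plan is to establish the identity by the method of moments. First I would observe that all spectra in sight are uniformly bounded: $R_\mu^*(f)$ acts on the fixed finite-dimensional space $\Hc_\mu$ and is positive by hypothesis, with $\|R_\mu^*(f)\| \leq C(\mu)\|f\|_\infty$; since $\mathcal{T}^{\nu}_{\mu,k}$ is completely positive with norm controlled by the (constant) ratio of trace-preservation, the spectrum of $\mathcal{T}^{\nu}_{\mu,k}(R_\mu^*(f))$ sits in a compact interval $I \subset [0,\infty)$ independent of $\nu$. The function $E_{\mu,k}(f)$ is likewise bounded on $\mathbb{CP}^1$. By Stone--Weierstrass on $I$, uniform approximation of $\phi$ by polynomials induces uniform approximation of both sides of the claimed limit, so it is enough to treat $\phi(x) = x^n$ for each $n \geq 0$.

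For $\phi(x) = x^n$ I would unfold the definition and use cyclicity of the trace together with $P_k P_k^\ast = I_{\Hc_{\mu+\nu-2k}}$ to reduce $\Tr(\mathcal{T}^{\nu}_{\mu,k}(R_\mu^*(f))^n)$ to a trace on $\Hc_\mu^{\otimes n} \otimes \Hc_\nu^{\otimes n}$ involving the projection $P_k^\ast P_k$ at each cyclic step. Expressing the Toeplitz operator as an integral of rank-one coherent-state projectors against $f$ (the defining property of $R_\mu^\ast$ up to the constant $\mu+1$), this becomes an $n$-fold integral
\[
\int_{(\mathbb{CP}^1)^n} f(z_1)\cdots f(z_n)\, K^{\nu}_{\mu,k}(z_1,\ldots,z_n)\, d\iota(z_1)\cdots d\iota(z_n),
\]
where $K^{\nu}_{\mu,k}$ is a cyclic kernel built from $SU(2)$ matrix coefficients coupling the $\Hc_\mu$ coherent states through $P_k^\ast P_k$. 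Equivariance forces $K^{\nu}_{\mu,k}$ to depend only on relative positions of the $z_i$'s on $\mathbb{CP}^1$, which is what allows explicit asymptotic analysis.

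The main analytic step, and the step I expect to be the principal obstacle, is identifying $\lim_{\nu\to\infty} \tfrac{1}{\nu} K^{\nu}_{\mu,k}$ as the cyclic kernel of the operator $E_{\mu,k}$ applied pointwise: the coherent states on the $\Hc_\nu$ side must sharpen to a $\delta$-function as $\nu\to\infty$, while the fixed-size $\Hc_\mu$ side produces a Berezin-smoothing kernel at each vertex of the $n$-cycle. Concretely one shows
\[
\frac{1}{\nu}\int_{(\mathbb{CP}^1)^n}\!\prod_{i=1}^n f(z_i)\, K^{\nu}_{\mu,k}\, d\iota^{\otimes n} \;\longrightarrow\; \int_{\mathbb{CP}^1} E_{\mu,k}(f)(z)^n\, d\iota(z),
\]
either by a direct Clebsch--Gordan/highest-weight computation in $\Hc_\mu \otimes \Hc_\nu$ to extract the leading $\nu$-asymptotics of the overlap with $\Hc_{\mu+\nu-2k}$, or by the more conceptual route of showing that $\mathcal{T}^{\nu}_{\mu,k} \circ R_\mu^\ast$ is itself an asymptotic Berezin--Toeplitz quantizer from $C(\mathbb{CP}^1)$ into $B(\Hc_{\mu+\nu-2k})$ with principal symbol $E_{\mu,k}(f)$, after which the classical Szegő/Berezin--Toeplitz eigenvalue-distribution theorem yields the claim. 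Combining the moment identity with the first paragraph's Stone--Weierstrass reduction proves the theorem.
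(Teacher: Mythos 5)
Your overall strategy --- reduce to $\phi(x)=x^n$ by Weierstrass and then prove a moment limit by identifying the asymptotic symbol of $\mathcal{T}_{\mu,k}^{\nu}(R_{\mu}^*(f))$ --- is the same as the paper's, and your first paragraph matches the paper's reduction (the paper pins the spectrum inside $[0,1]$ via $\nm{\mathcal{T}_{\mu,k}^{\nu}(R_{\mu}^*(f))}\leq\nm{R_{\mu}^*(f)}\leq\Tr(R_{\mu}^*(f))=1$, using positivity and the normalization of $f$; you should make sure your interval $I$ actually lands in $[0,1]$ so that $\phi\in C([0,1])$ applies). The difficulty is that everything after that is a plan whose two load-bearing steps are left unexecuted, and neither is routine. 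First, the identification of the limiting symbol: you assert that the fixed $\Hc_{\mu}$ side ``produces a Berezin-smoothing kernel'' whose limit is $E_{\mu,k}(f)$, but the specific formula $E_{\mu,k}(f)=\binom{\mu}{k}\sum_{l=0}^{k}(-1)^{k-l}\binom{k}{l}B_{\mu-l}(f)$ is precisely what must be proved; in the paper it comes from an explicit evaluation of $R_{\mu+\nu-2k}\mathcal{T}_{\mu,k}^{\nu}R_{\mu}^*(f)$ at the origin, an expansion of $|(s-w)^k|^2$ against $d\iota_{\nu}(w)$, and a Gauss ${}_2F_1(\,\cdot\,;1)$ summation to resum the coefficients, together with the limit $C_{\mu,\nu,k}^2\to\binom{\mu}{k}$. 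Nothing in your sketch produces the binomial coefficients or the alternating signs, and without them the right-hand side of the theorem is not identified.

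Second, the passage from ``the symbol converges'' to ``the normalized traces of all powers converge'' cannot be outsourced to a classical Berezin--Toeplitz eigenvalue-distribution theorem. In your cyclic-kernel formulation the $n$-fold integral carries a product of coherent-state overlaps of the form $(1+s_i\li{s_{i+1}})^{\nu}$ divided by the diagonal factors, multiplied by $(\nu+1)^{n}$, and one must show both that the off-diagonal mass vanishes and that the near-diagonal contribution is bounded \emph{uniformly in $\nu$} so that the uniform-continuity estimate on $f(s_1)\cdots f(s_n)-f(s)^n$ can be integrated; the paper needs a genuinely nontrivial combinatorial bound for exactly this (its Lemma giving $I_n(\nu)\leq 2^{2n}$, proved by iterating $\sum_{i}\binom{\kappa}{i}\binom{2\kappa}{i+j}^{-1}\leq 2\binom{\kappa}{j}^{-1}$ around the cycle). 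Moreover, at finite $\nu$ the operator is $R^*_{\mu+\nu-2k}$ applied to the $\nu$-dependent symbol $(\nu B_{\mu+\nu-2k})^{-1}E_{\mu,k}^{\nu}(f)$, so one must also control the inverse Berezin transform on the fixed finite-dimensional space $\mathrm{Im}(R_{\mu})$ and show the perturbation $g_{\nu}\to g$ does not disturb the trace asymptotics; a Szeg\H{o}-type theorem for a fixed symbol does not apply verbatim. Until these two steps are supplied, the argument does not close.
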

Here we have that:
$$ E_{\mu,k}(f) = \binom{\mu}{k} \sum_{l=0}^k (-1)^{k-l} \binom{k}{l} B_{\mu - l}(f).$$
Using denseness of polynomials in continuous functions on a compact interval we extend the result to the functional calculus of $A$ of continuous functions. We also note that $R_{\mu}^*$ is surjective, so it is enough to consider $\mathcal{T}_{\mu,k}^{\nu}(R^*_{\mu}(f))$ instead of $\mathcal{T}_{\mu,k}^{\nu}(A)$. We also study the operator $E_{\mu}$ and find the eigenvalues.

The method we use it the following. Realizing all transforms as integral kernels, we calculate:
$$R_{\nu + \mu - 2k} \mathcal{T}_{\mu,k}^{\nu}(R_{\mu}^*(f)).$$
Now we can calculate the trace of our operator
$$\mathcal{T}_{\mu,k}^{\nu} R_{\mu}^* = R_{\mu + \nu - 2k}^* B_{\mu + \nu - 2k}^{-1} R_{\mu + \nu - 2k} \mathcal{T}_{\mu,k}^{\nu} R_{\mu}^* $$
in limit, as the inverse Berezin transform $(\mu + \nu - 2k - 1) B_{\mu + \nu - 2k}^{-1}$ will be going to the identity strongly, $(\nu + 1)^n R_{\nu}( R_{\nu}^*(f)^n)$ is going to $f^n$ and we have the formula
$$ \Tr(A) = (\nu + 1) \int_{\C} R_{\mu}(A)(z) d \iota(z).$$
Now we use denseness of polynomials in $C([0,1])$.

Lieb and Solovej \cite{liebsolSymm} studied similar questions for the compact group $SU(n)$, proving similar inequalities by considering tensor products $\bigodot^{\mu} \C^n$ and projecting onto the leading components, and for the non-compact group $SU(1,1)$, by projecting onto the lowest weight component, also called the Cartan component, in the tensor product of two highest weight representations \cite{liebsolSU11}. In this case they proved some Wehrl-type inequalites; see also \cite{frank, sugi, kuli}. Some inequalities were improved by Frank \cite{frank}. It seems that these questions can be put in a more general context of highest weight representations of Hermitian Lie groups \cite{zhangCon}. In a future work, I will consider the unitary highest weight representations of the non-compact group $SU(1,1)$ realized on the weighted Bergman spaces and study the corresponding problem. I will calculate the trace of the quantum channels for all components in the tensor product of unitary highest weight representation of the non-compact group $SU(1,1)$, which is closely related to the case $SU(2)$.

We note that the Berezin transform is closely related to quantization on Kähler manifolds in Geometry and Mathematical Physics and have been studied extensively; see e.g. \cite{alieng, BMSCMP, UU}. Some of our results about Berezin transforms might be obtained from these results. However, we provide more precise results using the representation of $SU(2)$.

The paper is organized as follows. First we introduce representations of $SU(2)$ as reproducing kernel Hilbert spaces, go through some general theory and introduce quantum channels in Section \ref{backinfo}. In Section \ref{branproc} we realize the elements of $B(\Hc_{\mu})$ using the operator $R_{\mu}^*$, a Toeplitz operator, and we calculate $R_{\nu + \mu - 2k} \mathcal{T}_{\mu,k}^{\nu}(R_{\mu}^*(f))$. Finally, in Section \ref{funccalc} we study the Berezin transform $B_{\nu}$ and calculate $\lim_{\nu \rightarrow \infty} \Tr(\mathcal{T}_{\mu,k}^{\nu} R_{\mu}^*)$. We also study the operator $E_{\mu,k}$.

\section*{Acknowledgements}
I want to thank Genkai Zhang for fruitful and inspiring discussions.

\section{Preliminaries}
\label{backinfo}
Notation: $(a)_n = a(a+1) \dots (a+n-1)$ is the rising Pochhammer symbol and $(a)^n_{-} = a(a-1) \dots (a-n+1)$ is the falling Pochhammer symbol.

We study the representation theory of $SU(2)$. We define $\Hc_{\nu}$.

\begin{definition}
\label{innerprodspace}
Let $\Hc_{\nu}$ be the space of polynomials on $\C$ in $z$ of degree less than or equal to $\nu$ and let the inner product on it be given by

$$\langle f, g \rangle = \int_{ \C } f(z) \li{g(z)} d \iota_{\nu}(z),$$
where $d \iota_{\nu}(z) = \frac{(\nu + 1)}{(1 + |z|^2)^{ \nu }} \frac{dz}{\pi (1 + |z|^2)^{2}}$.
\end{definition}

We note that $d \iota_{0}(z) = \frac{dz}{ \pi (1 + |z|^2)^{2}}$ is the $SU(2)$-invariant measure $d \iota$ on $\C$ such that $\iota(\C)=  1$ (as a coordinate chart in $\mathbb{CP}^1$). Also, the norm is normalized such that $\nm{1}_{\nu} = 1$. The set $\{z^i\}_{i=0}^{\nu}$ is an orthogonal basis for $\Hc_{\nu}$ where
$$\nm{z^i}_{\nu}^2 = \binom{\nu}{i},$$
and $\Hc_{\nu}$ is a reproducing kernel Hilbert space (RKHS for short) with kernel $$K^{\nu}(z,w) = (1 + z \li{w})^{\nu}.$$

\begin{remark}
\label{repsu2facts}
Note that $SU(2)$ acts on the symmetric space $SU(2) / U(1)$ by left multiplication. This space is isomorphic to $\C \cup \{ \infty \} = \mathbb{CP}^1$ when the action on $\mathbb{CP}^1$ is given by $g \cdot [z_1 : z_2] = [g(z_1 : z_2)]$, matrix multiplication. Then we see that $U(1)$ is exactly the subgroup fixing $[0:1]$, and $SU(2)$ is transitive as it is transitive on the unit ball. Thus $\mathbb{CP}^1 \cong SU(2) / U(1)$. We note that $\C \subseteq \mathbb{CP}^1$ by $z \mapsto [z:1]$ is a dense coordinate chart. Thus it is enough to consider $\C \subseteq \mathbb{CP}^1$ and the action of $SU(2)$ on $G$ is given by:
$$\begin{pmatrix} a & b \\ c & d \end{pmatrix} \cdot z = \frac{a z + b}{c z + d}.$$
We shall mostly work on $\C$. We note that there is a $SU(2)$-invariant metric on $\mathbb{CP}^1$ making it a Riemannian symmetric space, see \cite[chapter VII, Proposition 1.1]{helg}.
\end{remark}
We now define the $SU(2)$-representation on $\Hc_{\nu}$.

\begin{definition}
Let $g \in SU(2)$ such that $g^{-1} = \begin{pmatrix} a & b \\ c & d \end{pmatrix} = \begin{pmatrix} a & b \\ - \overline{b} & \overline{a} \end{pmatrix}$, where $a,b \in \C$ such that $|a|^2 + |b|^2 = 1$. The representation is given by
$$ g \cdot f (z) = (-\li{b} z + \li{a})^{\nu} f(g^{-1} \cdot z) = (-\li{b} z + \li{a})^{\nu} f(\frac{az + b}{- \overline{b} z + \li{a}}).$$
\end{definition}

This is a unitary irreducible representation, the unique representation of $SU(2)$ of dimension $\nu + 1$. It is isomorphic to the usual realization of homogeneous polynomials of degree $\nu$.

Now let $\mu, \nu \in \N$ and $\nu > \mu$ (in the end we will let $\nu \rightarrow \infty$), and it is well known that \cite[appendix C]{hall}:

\begin{equation}
\label{tensorprodmult}
\Hc_{\mu} \otimes \Hc_{\nu} \cong \bigoplus_{k = 0}^{\mu} \Hc_{\mu + \nu - 2k}.
\end{equation}

We define a map $J_k: \Hc_{\mu} \otimes \Hc_{\nu} \rightarrow \Hc_{\mu + \nu - 2k}$.

\begin{definition}
Let $J_k: \Hc_{\mu} \otimes \Hc_{\nu} \rightarrow \Hc_{\mu + \nu - 2k}$ be given by
\begin{flalign*}
J_k(f)(\xi) & = \int_{\C^2} f(z,w) ( \frac{\li{z}}{1 + \xi \li{z}} - \frac{\li{w}}{1 + \xi \li{w}} )^k (1 + \xi \li{z})^{\mu} (1 + \xi \li{w})^{\nu} d \iota_{\mu}(z) d \iota_{\nu}(w)
\\ & = \int_{\C^2} f(z,w) (\li{z} - \li{w})^k (1 + \xi \li{z})^{\mu-k} (1 + \xi \li{w})^{\nu-k} d \iota_{\mu} (z) d \iota_{\nu}(w).
\end{flalign*}
\end{definition}

This operator $J_k$ is also a differential operator given by
$$J_k = (-1)^k \sum_{j = 0}^k (-1)^j \binom{k}{j} \frac{1}{ (- \mu)_j (- \nu)_{k-j}} \partial^j_z \partial^{k-j}_w f |_{z = w = \xi},$$
and it intertwines the representations. This can be proved using the reproducing kernel formula, but we skip the details here. The counterpart of $J_k$ for the holomorphic discrete series representations of $SL(2,\R)$ is called the Rankin-Cohen operator, see for example \cite[(1), p. 58]{zag}. The adjoint of $J_k$ can be obtained by direct computations.

\begin{lemma}
The adjoint of $J_k$ is:
$$ J_k^*(f)(z,w) = \int_{\C} (1 + z \li{\xi})^{\mu - k}(1 + w \li{\xi})^{\nu - k} f(\xi) (z-w)^k d \mu_{\mu + \nu - 2k}(\xi).$$
\end{lemma}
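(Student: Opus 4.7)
My plan is to recognise the integral defining $J_k$ as the inner product of $f$ with an explicit reproducing kernel–type element of $\Hc_\mu\otimes\Hc_\nu$, then invoke the definition of the adjoint together with Fubini.

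First I would take the given formula
$$J_k(f)(\xi)=\int_{\C^2} f(z,w)\,(\li z-\li w)^k(1+\xi\li z)^{\mu-k}(1+\xi\li w)^{\nu-k}\,d\iota_\mu(z)\,d\iota_\nu(w)$$
and rewrite the kernel as $\overline{H_\xi(z,w)}$, where
$$H_\xi(z,w)=(z-w)^k(1+\li\xi z)^{\mu-k}(1+\li\xi w)^{\nu-k}.$$
Since $(\li z-\li w)^k=\overline{(z-w)^k}$ and $(1+\xi\li z)^{\mu-k}=\overline{(1+\li\xi z)^{\mu-k}}$, this identification is just complex conjugation. Because $H_\xi$ is a polynomial of degree $\mu$ in $z$ and $\nu$ in $w$, it lies in $\Hc_\mu\otimes\Hc_\nu$, and the inner product convention of Definition \ref{innerprodspace} yields
$$J_k(f)(\xi)=\langle f,H_\xi\rangle_{\Hc_\mu\otimes\Hc_\nu}.$$

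Next I would use this to compute $J_k^*$ from its defining property. For any $g\in\Hc_\mu\otimes\Hc_\nu$ and $f\in\Hc_{\mu+\nu-2k}$,
\begin{align*}
\langle J_k^*f,g\rangle_{\Hc_\mu\otimes\Hc_\nu}
&=\langle f,J_kg\rangle_{\mu+\nu-2k}
=\int_\C f(\xi)\,\overline{\langle g,H_\xi\rangle}\,d\iota_{\mu+\nu-2k}(\xi)\\
&=\int_\C f(\xi)\int_{\C^2} H_\xi(z,w)\,\overline{g(z,w)}\,d\iota_\mu(z)\,d\iota_\nu(w)\,d\iota_{\mu+\nu-2k}(\xi).
\end{align*}
All integrands are polynomials in the relevant variables against Gaussian-type weights on $\C$, so Fubini applies without issue. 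Swapping the order of integration gives
$$\langle J_k^*f,g\rangle=\int_{\C^2}\left(\int_\C f(\xi)\,H_\xi(z,w)\,d\iota_{\mu+\nu-2k}(\xi)\right)\overline{g(z,w)}\,d\iota_\mu(z)\,d\iota_\nu(w),$$
and since this holds for all $g$, I would read off
$$J_k^*(f)(z,w)=\int_\C f(\xi)(z-w)^k(1+z\li\xi)^{\mu-k}(1+w\li\xi)^{\nu-k}\,d\iota_{\mu+\nu-2k}(\xi),$$
which is exactly the claimed formula (interpreting the measure $d\mu_{\mu+\nu-2k}$ in the statement as $d\iota_{\mu+\nu-2k}$).

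There is no real obstacle here; the argument is a bookkeeping exercise once the kernel $H_\xi$ is correctly identified. The only subtlety worth stressing is the conjugation: one must be careful that the $\xi$–dependence appearing holomorphically in $J_k(f)(\xi)$ corresponds to antiholomorphic dependence of $H_\xi(z,w)$ on $\xi$, so that $H_\xi$ is a genuine polynomial in its arguments $(z,w)$, making the above Fubini step and the resulting formula for $J_k^*$ manifestly well defined in $\Hc_\mu\otimes\Hc_\nu$.
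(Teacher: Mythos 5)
Your proof is correct: identifying the kernel of $J_k$ as $\overline{H_\xi(z,w)}$ with $H_\xi(z,w)=(z-w)^k(1+\li{\xi}z)^{\mu-k}(1+\li{\xi}w)^{\nu-k}\in\Hc_\mu\otimes\Hc_\nu$ and then unwinding the adjoint via Fubini is precisely the "direct computation" the paper alludes to but does not write out. You also correctly read the measure $d\mu_{\mu+\nu-2k}$ in the statement as $d\iota_{\mu+\nu-2k}$ (a notational slip in the paper), and the degree count ensuring $H_\xi\in\Hc_\mu\otimes\Hc_\nu$ is the only point that needed checking.
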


The map $J_k J_k^*: \Hc_{\mu + \nu - 2k} \rightarrow \Hc_{\mu + \nu - 2k}$ is a scalar multiple of the identity by Schur's Lemma, since $\Hc_{\mu + \nu - 2k}$ is irreducible. To find the scalar we calculate $J_k J_k^*(f)(0)$ for $f = 1$.

\begin{proposition}
\label{intertwinerconstant}
We have
$$ C_{\mu,\nu,k}^{-2} := J_k (J^*_k(1))(0) = \frac{k! (\mu + \nu - 2k + 2)_k}{(-\nu)_k (-\mu)_k}.$$
\end{proposition}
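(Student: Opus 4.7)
The plan is to evaluate the nested integrals directly, using the reproducing kernel property to knock out the inner integral and then extracting a standard combinatorial identity from the outer one.

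First I would compute $J_k^*(1)(z,w)$. Because $f=1$ is independent of $\xi$ and the factor $(z-w)^k$ pulls outside the integral, this reduces to
$$ (z-w)^k \int_{\C}(1+z\overline{\xi})^{\mu-k}(1+w\overline{\xi})^{\nu-k}\, d\iota_{\mu+\nu-2k}(\xi). $$
The integrand is antiholomorphic in $\xi$, and its holomorphic conjugate $(1+\overline{z}\xi)^{\mu-k}(1+\overline{w}\xi)^{\nu-k}$ is a polynomial of degree $\mu+\nu-2k$, hence lies in $\Hc_{\mu+\nu-2k}$. Integrating against $d\iota_{\mu+\nu-2k}$ therefore pairs the conjugate polynomial against $1$, and the reproducing kernel identity at the origin ($K^{\mu+\nu-2k}(\xi,0)=1$) forces the value to be the conjugate polynomial evaluated at $\xi=0$, namely $1$. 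Hence $J_k^*(1)(z,w)=(z-w)^k$.

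Substituting back and setting $\xi=0$ in the defining formula of $J_k$ collapses the kernels $(1+\xi\overline{z})^{\mu-k}(1+\xi\overline{w})^{\nu-k}$ to $1$, leaving
$$ J_k(J_k^*(1))(0)=\int_{\C^{2}}|z-w|^{2k}\, d\iota_{\mu}(z)\, d\iota_{\nu}(w). $$
Expanding $(z-w)^k\,\overline{(z-w)^k}$ by the binomial theorem and invoking orthogonality of the monomials $\{z^{i}\}$ under both $d\iota_{\mu}$ and $d\iota_{\nu}$ kills the off-diagonal terms and reduces the double integral to the single sum
$$ \sum_{i=0}^{k}\binom{k}{i}^{2}\nm{z^{k-i}}_{\mu}^{2}\,\nm{w^{i}}_{\nu}^{2}. $$

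The last step is purely combinatorial. Rewriting each $\nm{z^{i}}_{\nu}^{2}$ in terms of factorials and extracting common prefactors, the sum assumes the form
$$ \frac{(k!)^{2}(\mu-k)!(\nu-k)!}{\mu!\,\nu!\,k!}\sum_{i=0}^{k}\binom{k}{i}(\mu-k+1)_{i}(\nu-k+1)_{k-i}, $$
to which the Chu--Vandermonde identity $\sum_{i=0}^{k}\binom{k}{i}(a)_{i}(b)_{k-i}=(a+b)_{k}$ applies with $a=\mu-k+1$, $b=\nu-k+1$, yielding $(\mu+\nu-2k+2)_{k}$. Reassembling the prefactor by writing $\mu!/(\mu-k)!=(-1)^{k}(-\mu)_{k}$ and similarly for $\nu$ then gives the claimed value. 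I expect the only real obstacle to be the bookkeeping in this final Pochhammer-to-factorial conversion; no conceptual difficulty arises, as the whole argument is driven by the reproducing kernel at $0$ and one application of Chu--Vandermonde.
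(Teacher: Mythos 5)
Your proposal is correct and follows essentially the same route as the paper: reduce $J_k^*(1)$ to $(z-w)^k$, compute $J_k(J_k^*(1))(0)=\nm{(z-w)^k}^2_{\Hc_\mu\otimes\Hc_\nu}$ by binomial expansion and orthogonality of monomials, and close with a Vandermonde-type summation giving $(\mu+\nu-2k+2)_k$. The only cosmetic differences are that you evaluate the inner integral via the reproducing property at $0$ where the paper expands and kills terms one by one, and that you apply the Chu--Vandermonde convolution for rising Pochhammer symbols directly where the paper packages the same identity as the terminating Gauss sum ${}_2F_1(-k,\nu+1-k,-\mu;1)$ via Lemma \ref{Gausshgf}.
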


Before we prove this we recall the following summation formula \cite[Corollary 2.2.3]{andaskroy}.

\begin{lemma}
\label{Gausshgf}
For $n$ a non-negative integers and $b,c$ integers such that $|c| \geq n$ we have that
$${}_2F_1(-n, b, c, 1) = \frac{(c-b)_n}{(c)_n}$$
\end{lemma}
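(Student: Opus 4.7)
The plan is to reduce the claim to the equivalent identity
\[ \sum_{k=0}^n (-1)^k \binom{n}{k} \frac{(b)_k}{(c)_k} = \frac{(c-b)_n}{(c)_n}, \]
and prove this by induction on $n$. The reduction comes from the series definition ${}_2F_1(-n, b, c, 1) = \sum_{k=0}^n \frac{(-n)_k (b)_k}{(c)_k k!}$ combined with the elementary identity $(-n)_k / k! = (-1)^k \binom{n}{k}$. The hypothesis that $c$ is an integer with $|c| \geq n$ guarantees that $(c)_k \neq 0$ for every $k \leq n$, so no denominator vanishes.

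The base case $n=0$ is immediate, as both sides equal $1$. For the inductive step, I denote the left-hand side by $S(n,b,c)$, apply Pascal's identity $\binom{n+1}{k} = \binom{n}{k} + \binom{n}{k-1}$ to split $S(n+1, b, c)$ into two sums, and reindex the second sum by $j = k-1$. Factoring out $b/c$ via the shift identities $(b)_{j+1} = b (b+1)_j$ and $(c)_{j+1} = c(c+1)_j$ yields the recurrence
\[ S(n+1, b, c) = S(n, b, c) - \frac{b}{c}\, S(n, b+1, c+1). \]
Applying the inductive hypothesis to both terms gives $S(n+1, b, c) = \frac{(c-b)_n}{(c)_n} - \frac{b}{c} \cdot \frac{(c-b)_n}{(c+1)_n}$, and then using $(c+1)_n = (c)_n (c+n)/c$ and $(c-b)_{n+1} = (c-b+n)(c-b)_n$, routine algebra collapses this to $(c-b)_{n+1}/(c)_{n+1}$, completing the induction.

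The main obstacle is the Pochhammer bookkeeping in the final simplification; nothing deeper is required. The one subtle point is the case where $c$ is a negative integer with $c \leq -n$: then $(c+1)_n$ can contain a zero factor, so the recurrence as written is awkward. In that degenerate regime I would clear denominators and observe that both sides of the cleared identity
\[ (c)_n \cdot {}_2F_1(-n, b, c, 1) = (c-b)_n \]
are polynomials in $c$ of degree $n$ for fixed $n$ and $b$; their agreement on the infinitely many values $c \in \{n, n+1, n+2, \ldots\}$ (already handled by the induction) forces agreement as polynomials in $c$, hence on every integer $c$ with $|c| \geq n$.
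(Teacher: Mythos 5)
Your proof is correct, but the paper does not actually prove this lemma at all: it is quoted verbatim as the terminating Chu--Vandermonde (Gauss) summation, with a citation to Corollary~2.2.3 of Andrews--Askey--Roy. So your contribution is a self-contained elementary argument where the paper relies on a reference. The reduction to $\sum_{k=0}^n (-1)^k \binom{n}{k} (b)_k/(c)_k = (c-b)_n/(c)_n$, the Pascal split, and the recurrence $S(n+1,b,c) = S(n,b,c) - \tfrac{b}{c}S(n,b+1,c+1)$ all check out, and the final simplification via $(c+1)_n = (c)_n(c+n)/c$ and $1 - b/(c+n) = (c-b+n)/(c+n)$ is exactly right. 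One remark: the degenerate case you flag does not actually arise. To prove the statement at level $n+1$ you assume $|c| \geq n+1$; if $c$ is a negative integer this forces $c \leq -(n+1)$, so the factors $c+1, \dots, c+n$ of $(c+1)_n$ are all at most $-1$ and hence nonzero, and moreover $|c+1| \geq n$ so the inductive hypothesis applies directly to $S(n,b+1,c+1)$. Your polynomial-identity fallback (clearing denominators to $(c)_n \cdot {}_2F_1(-n,b,c,1) = (c-b)_n$, two degree-$n$ polynomials in $c$ agreeing at infinitely many integers) is nonetheless a valid and clean way to dispose of any residual worry, and it would also extend the identity to non-integer parameters. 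Either way the argument is complete; it is simply a proof the paper chose to outsource.
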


Now we are ready to prove Proposition \ref{intertwinerconstant}.

\begin{proof}
First we see that

\begin{flalign*}
J^*_k(1)(z,w) & = \int_{\C} (1 + z \li{\xi})^{\mu - k}(1 + w \li{\xi})^{\nu - k} (z-w)^k d \mu_{\mu + \nu - 2k}(\xi)
\\ & = (z-w)^k \sum_{j = 1}^{\mu - k} \sum_{i = 1}^{\nu - k} \binom{\mu - k}{j} \binom{\nu - k}{i} \int_{\C} z^j \li{\xi}^j w^i \li{\xi}^i d \mu_{\mu + \nu - 2k}(\xi)
\\ & = (z-w)^k \sum_{j = 1}^{\mu - k} \sum_{i = 1}^{\nu - k} \binom{\mu - k}{j} \binom{\nu - k}{i} w^i z^j \int_{\C} \li{\xi}^j \li{\xi}^i d \mu_{\mu + \nu - 2k}(\xi)
\\ & = (z-w)^k \sum_{j = 1}^{\mu - k} \sum_{i = 1}^{\nu - k} \binom{\mu - k}{j} \binom{\nu - k}{i} w^i z^j \langle 1, \xi^{i + j} \rangle
\\ & = (z-w)^k.
\end{flalign*}
Then we see that

\begin{flalign*}
J_k (J^*_k(1))(0) & = J_k( (z-w)^k)(0)
\\ & = \int_{\C^2} (z-w)^k (\li{z} - \li{w})^k 1^{\mu} 1^{\nu} d \iota_{\mu}(z) d \iota_{\nu}(w)
\\ & = \int_{\C^2} (z-w)^k (\li{z} - \li{w})^k d \iota_{\mu}(z) d \iota_{\nu}(w)
\\ & = \nm{ (z-w)^k}_{\Hc_{\mu} \otimes \Hc_{\nu}}^2
\\ & = \nm{ \sum_{j=0}^k (-1)^j \binom{k}{j} z^j w^{k-j} }_{\Hc_{\mu} \otimes \Hc_{\nu}}^2
\\ & = \sum_{j=0}^k \binom{k}{j}^2 \nm{z^j}_{\mu}^2 \nm{w^{k-j}}^2_{\nu}
\\ & = \sum_{j=0}^k \binom{k}{j}^2 \binom{\mu}{j}^{-1} \binom{\nu}{k-j}^{-1}
\\ & = k! \sum_{j=0}^k \binom{k}{j} \frac{1}{(\mu)^j_{-} (\nu)^{k-j}}_{-}
\\ & = \frac{k!}{(\nu+1-k)_k} \sum_{j=0}^k \binom{k}{j} \frac{(-1)^j (\nu+1 -k)_j}{(- \mu)_j}
\\ & = \frac{k!}{(\nu + 1 -k)_k} {}_2F_{1}(-k, \nu+1-k, -\mu, 1).
\end{flalign*}
Using Lemma \ref{Gausshgf} we get

\begin{flalign*}
J_k (J^*_k(1))(0) & = \frac{k!}{(\nu + 1 -k)_k} {}_2F_{1}(-k, \nu+1-k, -\mu, 1)
\\ & = \frac{k!}{(\nu + 1 -k)_k} \frac{(- \mu - \nu - 1 + k)_k}{(-\mu)_k}
\\ & = \frac{k!}{(\nu)^k} \frac{(\mu+\nu+1-k)^k}{(\mu)^k}
\\ & = \frac{k! (\mu + \nu - 2k + 2)_k}{(\nu)^k (\mu)^k} = \frac{k! (\mu + \nu - 2k + 2)_k}{(-\nu)_k (-\mu)_k}.
\end{flalign*}
\end{proof}

We define the projection onto the irreducible subspaces.

\begin{definition}
We define $P_k := C_{\mu,\nu,k} J_k: \Hc_{\mu} \otimes \Hc_{\nu} \rightarrow \Hc_{\mu + \nu - 2k}$, where $C_{\mu, \nu,k}$ is defined in Proposition \ref{intertwinerconstant}.
\end{definition}

We see that $P_k$ is the orthogonal projection onto the subspace $\Hc_{\mu + \nu - 2k}$ in Decomposition (\ref{tensorprodmult}). We need also the decomposition of $L^2(SU(2)/U(1)) = L^2(\mathbb{CP}^1)$ under the action of $SU(2)$
\begin{equation}
\label{multfree}
L^2(\mathbb{CP}^1) = \bigoplus_{k=0}^{\infty} \widetilde{\Hc_{2k}},
\end{equation}
where the sum is in the $L^2$ sense and $\Hc_{2k} \cong \widetilde{\Hc_{2k}}$ by the map
$$ v \mapsto (g \mapsto \langle v, g \cdot v_0 \rangle),$$
where $v_0$ is the $U(1)$-fixed vector in $\Hc_{2k}$, i.e. $f(z) = z^k$. Note that this decomposition is multiplicity free. For a more general result, see \cite[chapter V, Theorem 4.3]{helggga}. Also, $C^{\infty}(\mathbb{CP}^1) \subseteq L^2(\mathbb{CP}^1)$ is dense and any $f \in C^{\infty}(\mathbb{CP}^1)$ can be written as
$$ f = \sum_{k=0}^{\infty} f_k,$$
where $f_k \in \Hc_{2k}$ and the convergence is absolute convergence.

Now we define some further $SU(2)$-invariant operators on the representation spaces. First we define the map $R_{\nu}$, which is commonly called the symbol of the operator.

\begin{definition}
\label{symboldef}
Let $R_{\nu}$ be the map
$$R_{\nu} : \Hc_{\nu} \otimes \li{\Hc_{\nu}} \rightarrow C^{\infty}( \mathbb{CP}^1) \subseteq L^2(\mathbb{CP}^1),$$
given by $R_{\nu}(f_1 \otimes \li{f_2}) = \frac{f_1(z) \li{f_2(z)}}{(1 + |z|^2)^{\nu}}$, for $f_1, f_2 \in \Hc_{\nu}$.
\end{definition}

Observe that $\li{f_2}$ is antiholomorphic and that the map is $SU(2)$-invariant. Note that we have the identification $\Hc_{\nu} \otimes \li{\Hc_{\nu}} = B(\Hc_{\nu})$, where $f_1 \otimes \li{f_2}$ corresponds to the kernel $(x,y) \mapsto f_1(x)\li{f_2(y)}$ of an operator. Note that by Equation (\ref{tensorprodmult})
$$ \Hc_{\nu} \otimes \li{\Hc_{\nu}} \cong \bigoplus_{i=0}^{\nu} \Hc_{2 i}.$$
A direct calculation shows that $R_{\nu}^*(f) = \frac{1}{\nu + 1} T_f$, where $T_f$ is the Toeplitz operator given by $T_f(g) = P M_f P^*(h)$, where $h \in L^2(\mathbb{CP}^1)$, $M_f$ is multiplication by $f$ and $P$ is projecting onto $\Hc_{\nu}$ from $L^2(\mathbb{CP}^1)$. Explicitly, this means

$$ T_f(h)(z) = \int_{\C} f(x)h(x) K^{\nu}(z,x) d \iota_{\nu}(x).$$
Its kernel is $T_f(x,y) = \int_{\C} K^{\nu}(x,z) K^{\nu}(z,y) f(z) d \iota_{\nu}(z)$. Hence we get

$$R_{\nu}^*(f)(x,y) = \frac{1}{\nu + 1} \int_{\C} K^{\nu}(x,z)K^{\nu}(z,y)f(z) d \iota_{\nu}(z).$$

\begin{remark}
\label{Rnuinjective}
The operator $R_{\nu}$ is injective by general arguments; it can also be proved by differentiating by $\partial_z$ and $\partial_{\li{z}}$. As a consequence, the map $R_{\nu}^*$ is surjective as the $\Hc_{\nu}$ are finite dimensional, and the maps

$$R_{\nu}: \Hc_{\nu} \otimes \li{\Hc_{\nu}} \rightarrow \mathrm{Im}(R_{\nu})$$
and
$$R_{\nu}^*: \mathrm{Im}(R_{\nu}) \rightarrow \Hc_{\nu} \otimes \li{\Hc_{\nu}}$$
are bijective.
\end{remark}

We now define the Berezin transform.

\begin{definition}
\label{berdef}
The Berezin transform $B_{\nu}$ is defined by
$$B_{\nu} := R_{\nu} R_{\nu}^*: L^2(\mathbb{CP}^1) \rightarrow L^2(\mathbb{CP}^1).$$
\end{definition}

Note that $B_{\nu}(C^{\infty}(\mathbb{CP}^1)) \subseteq C^{\infty}(\mathbb{CP}^1)$. More explicitly, for $f \in C(\mathbb{CP}^1)$ we see that

$$ B_{\nu}(f)(z) = \int_{\C} \frac{(1 + z \li{s})^{\nu} (1 + \li{z} s)^{\nu}}{(1 + |z|^2)^{\nu} (1 + |s|^2)^{\nu}} f(s) d \iota (s).$$
Note that by Cauchy-Schwarz
$$ |B_{\nu}(f)(z)| \leq \int_{\C} |\frac{(1 + z \li{s})^{\nu} (1 + \li{z} s)^{\nu}}{(1 + |z|^2)^{\nu} (1 + |s|^2)^{\nu}}| \cdot |f(s)| d \iota (s) \leq \nm{f}_{\infty}.$$
Observe that the Berezin transform is $SU(2)$-invariant. Now by Decomposition \ref{multfree} and Schur's lemma, $B_{\nu}$ must act as a constant on each subspace $\Hc_{2k} \subseteq C^{\infty}(\mathbb{CP}^1)$. From \cite{zhangBer} we find that for $f \in \widetilde{\Hc_{2k}}$
\begin{equation}
\label{Bnuconstant}
B_{\nu}(f) =  \frac{(\nu!)^2}{(\nu + k + 1)! \cdot (\nu - k)!} f
\end{equation}
when $k \leq \nu$, and
\begin{equation}
\label{Bnu0}
B_{\nu}(f) = 0
\end{equation}
when $k > \nu$.

As we know that any operator $A \in B(\Hc_{\nu})$ is of the form $R_{\nu}^*(f)$ for some $f$ in $\mathrm{Im}(R_{\nu})$, a natural question is to see which $A \in \Hc_{\nu} \otimes \li{\Hc_{\nu}} = B(\Hc_{\nu})$ corresponds to which $f \in \mathrm{Im}(R_{\nu})$; that is, we want to find $(R_{\nu}^*)^{-1}(A) = B_{\nu}^{-1} R_{\nu}(A)$. We first split $A$ into the $\widetilde{\Hc_{2k}}$ components, $0 \leq k \leq \nu$, i.e. $A = A_0 + \dots + A_{\nu}$, where $A_{k} \in \widetilde{\Hc_{2 k}} \subseteq \Hc_{\nu} \otimes \li{\Hc_{\nu}}$. As $B_{\nu}$ acts as the constant $\frac{(\nu!)^2}{(\nu + k + 1)! \cdot (\nu - k)!}$ on the space $\Hc_{2k}$, $B_{\nu}^{-1}$ acts as the constant $C_{\nu}(k) = \frac{(\nu + k + 1)! \cdot (\nu - k)!}{(\nu!)^2}$ on that space. We conclude that

$$ (R_{\nu}^*)^{-1}(A)(z) = \sum_{k=0}^{\nu} C_{\mu}(k) \frac{A_k(z,z)}{(1 + |z|^2)^{\nu}}.$$

We recall some simple facts about reproducing kernel Hilbert spaces. For a (finite dimensional) reproducing kernel Hilbert spaces $\Hc$ of functions on a measure space $(X, \mu)$, equipped with the $L^2(X)$ inner product and reproducing kernel $K(x,y)$, we have for any operator $\Gamma$ on it, writing $K_x(y) = K(y,x)$
$$ \Gamma(f)(x) = \int_{X} L(x,y) f(y) d \iota_{\nu}(y),$$
where $x \mapsto L(x,y)$ is in $\Hc$ and $y \mapsto \li{L(x,y)}$ are in $\Hc$. We note that
$$L(x,y) = \li{\Gamma^*(K_x)(y)} = \langle K_y, \Gamma^*(K_x) \rangle = \langle \Gamma(K_y), K_x \rangle = \Gamma(K_y)(x),$$
Now let $\{e_i\}_{i=1}^{\dim(\Hc)}$ be an orthonormal base for $\Hc$, then
$$K(x,y) = \sum_{i=1}^{\dim(\Hc)} e_i(x) \li{e_i(y)},$$
and for $\Gamma$ an operator on $\Hc_{\mu}$
$$ \Tr(\Gamma) = \int_{X} L(y,y) d \iota_{\nu}(y).$$

\section{Quantum channels \& Berezin transforms}
\label{branproc}

We now define quantum channels.

\begin{definition}
Associated to Decomposition (\ref{tensorprodmult}) we define
$$\mathcal{T}_{\mu,k}^{\nu}: B(\Hc_{\mu}) \rightarrow B(\Hc_{\mu + \nu - 2k}),$$
given by $\mathcal{T}_{\mu,k}^{\nu}(A) =  P_k \circ A \otimes I \circ P_k^*$. Furthermore, we define a renormalization
$$\widehat{\mathcal{T}}_{\mu,k}^{\nu}(A) = \frac{\mu + 1}{\mu + \nu - 2k + 1} P_k \circ A \otimes I \circ P_k^*.$$
\end{definition}

These maps will turn out to be positive and $\widehat{\mathcal{T}}_{\mu,k}^{\nu}$ will be trace-preserving, making it a quantum channel. These channels were defined by Lieb and Solovej \cite{liebsolBloch} for the case $k=0$. To my knowledge, the quantum channels for general $k$ are first studied here. There are many questions one can ask of these channels, similar to the case $k = 0$ studied by Lieb and Solovej. Here we will mainly study limit formulas of the trace. We prove some properties of the maps $\mathcal{T}_{\mu,k}^{\nu}$.

\begin{proposition}
The map $\mathcal{T}_{\mu,k}^{\nu}$ is completely positive, and the map $\widehat{\mathcal{T}}_{\mu,k}^{\nu}$ is completely positive and trace-preserving.
\end{proposition}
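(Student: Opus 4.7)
The plan is to verify complete positivity structurally and to compute the trace explicitly via a partial-trace/Schur argument.

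For complete positivity of $\mathcal{T}_{\mu,k}^{\nu}$, I would write it as a composition of two manifestly completely positive maps. First, the ampliation $A \mapsto A \otimes I_{\nu}$ from $B(\Hc_{\mu})$ to $B(\Hc_{\mu} \otimes \Hc_{\nu})$ is completely positive (it is a $*$-homomorphism). Second, the map $X \mapsto P_k X P_k^{*}$ from $B(\Hc_{\mu}\otimes\Hc_{\nu})$ to $B(\Hc_{\mu+\nu-2k})$ is of Kraus/Stinespring form with a single Kraus operator $P_k$ and is therefore completely positive. Composing yields complete positivity of $\mathcal{T}_{\mu,k}^{\nu}$, and then of $\widehat{\mathcal{T}}_{\mu,k}^{\nu}$ since it is a positive scalar multiple.

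For the trace, I would use cyclicity and the identity $P_k P_k^{*} = I_{\Hc_{\mu+\nu-2k}}$ (which holds because, as noted just after the definition of $P_k$, it is the orthogonal projection onto the $k$-th summand, hence a partial isometry with final space $\Hc_{\mu+\nu-2k}$). Writing $Q_k := P_k^{*} P_k \in B(\Hc_{\mu} \otimes \Hc_{\nu})$, which is the orthogonal projection onto the isotypic subspace isomorphic to $\Hc_{\mu+\nu-2k}$ in the decomposition (\ref{tensorprodmult}), cyclicity gives
$$ \Tr\bigl(\mathcal{T}_{\mu,k}^{\nu}(A)\bigr) = \Tr\bigl(P_k(A\otimes I_{\nu})P_k^{*}\bigr) = \Tr\bigl((A\otimes I_{\nu})Q_k\bigr) = \Tr_{\Hc_{\mu}}\!\bigl(A\cdot \Tr_{\Hc_{\nu}}(Q_k)\bigr). $$

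Next I would observe that $Q_k$ commutes with the $SU(2)$-action on $\Hc_{\mu}\otimes\Hc_{\nu}$, so its partial trace $\Tr_{\Hc_{\nu}}(Q_k)$ commutes with the $SU(2)$-action on the irreducible representation $\Hc_{\mu}$. By Schur's lemma, $\Tr_{\Hc_{\nu}}(Q_k) = c_k I_{\mu}$ for some scalar $c_k$. To determine $c_k$, take the trace in $\Hc_{\mu}$:
$$ (\mu+1)c_k = \Tr(Q_k) = \dim \Hc_{\mu+\nu-2k} = \mu+\nu-2k+1, $$
so $c_k = (\mu+\nu-2k+1)/(\mu+1)$. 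Consequently $\Tr(\mathcal{T}_{\mu,k}^{\nu}(A)) = \tfrac{\mu+\nu-2k+1}{\mu+1}\Tr(A)$, and the chosen normalization in $\widehat{\mathcal{T}}_{\mu,k}^{\nu}$ exactly cancels this factor, giving trace preservation.

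There is no serious obstacle here; the only point requiring care is the clean identification of $P_k^{*}P_k$ with the isotypic projector (which follows from the normalization $C_{\mu,\nu,k}$ fixed in Proposition \ref{intertwinerconstant}), and the application of Schur's lemma to the partial trace rather than to $Q_k$ itself.
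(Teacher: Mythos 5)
Your proof is correct. The complete positivity part is identical to the paper's: both factor $\mathcal{T}_{\mu,k}^{\nu}$ as the ampliation $A \mapsto A \otimes I_{\nu}$ followed by conjugation by the single Kraus operator $P_k$. The trace computation, however, takes a genuinely different route. The paper ``twirls'' the operator: it writes $\Tr(P_k(A\otimes I)P_k^*)$ as an average over $SU(2)$, applies Schur's lemma to $\int_{SU(2)} gAg^{-1}\,dg = \frac{\Tr(A)}{\mu+1}I$, and then only needs $\Tr(P_k P_k^*) = \dim \Hc_{\mu+\nu-2k}$. You instead use cyclicity to pass to the isotypic projector $Q_k = P_k^*P_k$, reduce to the partial trace $\Tr_{\Hc_{\nu}}(Q_k)$, and apply Schur's lemma to that partial trace, which commutes with the action on $\Hc_{\mu}$. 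The two arguments are essentially dual: the paper averages the input $A$ into a scalar, you collapse the channel's dual effect $\Tr_{\Hc_{\nu}}(Q_k)$ into a scalar. Your version is slightly more economical (no group integration, and it exhibits the adjoint channel as unital up to the constant $c_k$, which is exactly the statement of trace preservation), while the paper's version avoids introducing partial traces and the identification of $P_k^*P_k$ with the isotypic projector. Both correctly rely on the normalization $P_kP_k^* = I$ fixed by Proposition \ref{intertwinerconstant}, and your closing caveat about that identification is the right point to flag.
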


\begin{proof}
It is obvious that the map $\mathcal{T}_{\mu,k}^{\nu}$ is completely positive, as it is the composition of the $*$-homomorphism $A \mapsto A \otimes I$ and $A \mapsto P_k A  P_k^*$. It follows that $\widehat{\mathcal{T}}_{\mu,k}^{\nu}$ is completely positive as well.

Now we prove $\widehat{\mathcal{T}}_{\mu,k}^{\nu}$ is trace preserving. We see
\begin{flalign*}
\Tr(\widehat{\mathcal{T}}_{\mu,k}^{\nu}(A)) & = \frac{\mu + 1}{\mu + \nu - 2k + 1} \Tr(P_k \circ A \otimes I \circ P_k^*)
\\ & = \frac{\mu + 1}{\mu + \nu - 2k + 1} \int_{SU(2)} \Tr(g P_k \circ A \otimes I \circ P_k^* g^{-1}) dg
\\ & = \frac{\mu + 1}{\mu + \nu - 2k + 1} \int_{SU(2)} \Tr(P_k \circ gAg^{-1} \otimes I \circ P_k^*) dg
\\ & = \frac{\mu + 1}{\mu + \nu - 2k + 1} \Tr(P_k \circ (\int_{SU(2)} gAg^{-1} dg) \otimes I \circ P_k^*).
\end{flalign*}
By Schur's Lemma $\int_{SU(2)} gAg^{-1} dg = \frac{
\Tr(A)}{\dim(\Hc_{\mu})} I$. Hence
$$ \Tr(\widehat{\mathcal{T}}_{\mu,k}^{\nu}(A)) = \frac{\mu + 1}{\mu + \nu - 2k + 1} \Tr(A) \frac{\dim(\Hc_{\nu + \mu - 2k})}{\dim(\Hc_{\mu})} = \Tr(A).$$
\end{proof}

As in \cite{liebsolBloch}, our eventual goal is calculating $\lim_{\nu \rightarrow \infty} \frac{1}{\dim(\Hc_{\mu + \nu - 2k})} \Tr( \mathcal{T}_{\mu,k}^{\nu}(A))$.

\begin{remark}
\label{munuktonu}
Notationally it is easier to write $\lim_{\nu \rightarrow \infty} \frac{1}{\nu} \Tr( \mathcal{T}_{\mu,k}^{\nu}(A))$ instead of $\lim_{\nu \rightarrow \infty} \frac{1}{\dim(\Hc_{\mu + \nu - 2k})} \Tr( \mathcal{T}_{\mu,k}^{\nu}(A))$. They will have the same limit, as 
$$\dim(\Hc_{\mu+\nu-2k}) = \mu + \nu - 2k + 1.$$
In the rest of the paper, we will often exchange $\mu + \nu - 2k + 1$ with $\nu$ for notational convenience.
\end{remark}

We compute the integral kernel of $\mathcal{T}_{\mu,k}^{\nu}(A)$.

\begin{proposition}
\label{gammakernel}
The operator $\mathcal{T}_{\mu,k}^{\nu}(A) = P_k \circ A \otimes I \circ P_k^*$ has kernel
\begin{flalign*}
L(\xi, x) & = P_k \circ A \otimes I \circ P_k^* (K_x)(\xi)
\\ & = C_{\mu,\nu,k}^2  \int_{\C^3} (1 + u \li{x})^{\mu - k} (1 + w \li{x})^{\nu - k} (u - w)^k A(z,u) (\li{z} - \li{w})^k \cdot
\\ & (1 + \xi \li{z})^{\mu - k} (1 + \xi \li{w})^{\nu - k} d \iota_{\mu}(u) d \iota_{\mu}(z) d \iota_{\nu}(w),
\end{flalign*}
where $A(x,y)$ is the kernel of $A$.
\end{proposition}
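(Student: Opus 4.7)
The plan is to exploit the standard identification recalled in the preliminaries: for any operator $\Gamma$ on a reproducing kernel Hilbert space, the integral kernel is $L(\xi, x) = \Gamma(K_x)(\xi)$. Taking $\Gamma = \mathcal{T}_{\mu,k}^{\nu}(A) = P_k \circ (A \otimes I) \circ P_k^*$ and letting $K_x(\xi') = (1+\xi'\li{x})^{\mu+\nu-2k}$ denote the reproducing kernel of $\Hc_{\mu+\nu-2k}$, I would compute $L(\xi,x)$ by composing the three operators in order and evaluating at $K_x$.

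The key step is the first application: computing $P_k^*(K_x) = C_{\mu,\nu,k} J_k^*(K_x)$. Substituting $K_x$ into the integral formula for $J_k^*$ gives
$$C_{\mu,\nu,k}(u-w)^k \int_{\C}(1+u\li{\xi'})^{\mu-k}(1+w\li{\xi'})^{\nu-k}(1+\xi'\li{x})^{\mu+\nu-2k}\, d\iota_{\mu+\nu-2k}(\xi').$$
The antiholomorphic factor $(1+u\li{\xi'})^{\mu-k}(1+w\li{\xi'})^{\nu-k}$ is a polynomial in $\li{\xi'}$ of total degree exactly $\mu+\nu-2k$, hence is of the form $\li{h(\xi')}$ with $h(\xi') = (1+\li{u}\xi')^{\mu-k}(1+\li{w}\xi')^{\nu-k} \in \Hc_{\mu+\nu-2k}$. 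By the reproducing property of $\Hc_{\mu+\nu-2k}$ the integral equals $\langle K_x, h\rangle = \li{h(x)} = (1+u\li{x})^{\mu-k}(1+w\li{x})^{\nu-k}$, so that
$$P_k^*(K_x)(u,w) = C_{\mu,\nu,k}(1+u\li{x})^{\mu-k}(1+w\li{x})^{\nu-k}(u-w)^k.$$

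The remaining two steps are routine. Applying $A\otimes I$, where $A$ acts on the first tensor factor through its kernel $A(z,u)$, integrates the previous expression against $A(z,u)$ in the variable $u$ with respect to $d\iota_\mu(u)$. Then applying $P_k = C_{\mu,\nu,k} J_k$ via its integral formula introduces two further integrations, over $z$ with respect to $d\iota_\mu$ and over $w$ with respect to $d\iota_\nu$, together with the antiholomorphic factor $(\li{z}-\li{w})^k(1+\xi\li{z})^{\mu-k}(1+\xi\li{w})^{\nu-k}$. The two constants combine to $C_{\mu,\nu,k}^2$, and reordering the factors in the resulting triple integral reproduces the stated formula.

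The only real obstacle is the reproducing-kernel reduction in Step 1: recognizing that the antiholomorphic degree of the integrand exactly matches $\dim\Hc_{\mu+\nu-2k}-1$ is what collapses the $\xi'$-integral and keeps the final formula at three integrations rather than four. After that, the rest is just bookkeeping for the composition of integral operators.
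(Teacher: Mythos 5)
Your proposal is correct and follows essentially the same route as the paper: compute $J_k^*(K_x)$ (equivalently $P_k^*(K_x)$) by the reproducing property of $\Hc_{\mu+\nu-2k}$, then compose with $A\otimes I$ via its kernel and with $J_k$ via its integral formula, collecting the constant $C_{\mu,\nu,k}^2$. The only cosmetic difference is that you carry the normalization $C_{\mu,\nu,k}$ through each step rather than inserting $C_{\mu,\nu,k}^2$ at the end, and your remark that the antiholomorphic factor has degree at most $\mu+\nu-2k$ (so the $\xi'$-integral collapses) is exactly the justification the paper uses implicitly when it writes the integral as an inner product against $K_x$.
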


\begin{proof}
We calculate

\begin{flalign*}
J_k^*(K_x)(z,w) & = \int_{\C} (1 + z \li{\xi})^{\mu - k} (1 + w \li{\xi})^{\nu - k} K(\xi,x) (z - w)^k d \mu_{\mu + \nu - 2k}(\xi)
\\ & = \langle K_x, (1 + \li{z} \cdot -)^{\mu - k} (1 + \li{w} \cdot -)^{\nu - k}(\li{z} - \li{w})^k \rangle
\\ & = (1 + z \li{x})^{\mu - k} (1 + w \li{x})^{\nu - k}(z - w)^k.
\end{flalign*}

Then we see that

\begin{flalign*}
(A \otimes I) \circ J_k^*(K_x)(z,w) & = \int_{\C} J_k^*(K_x)(u,w) A(z,u) d \iota_{\mu}(u)
\\ & = \int_{\C} (1 + u \li{x})^{\mu - k} (1 + w \li{x})^{\nu - k} (u-w)^k A(z,u) d \iota_{\mu}(u).
\end{flalign*}

Thus

\begin{flalign*}
J_k \circ (A \otimes I) \circ J_k^* (K_x) (\xi) & = \int_{\C^2} (A \otimes I) \circ J_k^* (K_x) (z,w) \cdot
\\ & (\frac{ \li{z} }{1 + \xi \li{z}} - \frac{\li{w}}{1 + \xi \li{w}})^k (1 + \xi \li{z})^{\mu} (1 + \xi \li{w})^{\nu} d \iota_{\mu}(z) d \iota_{\nu}(w)
\\ & = \int_{\C^3} (1 + u \li{x})^{\mu - k} (1 + w \li{x})^{\nu - k} (u - w)^k A(z,u) \cdot
\\ & (\li{z} - \li{w})^k (1 + \xi \li{z})^{\mu - k} (1 + \xi \li{w})^{\nu - k} d \iota_{\mu}(u) d \iota_{\mu}(z) d \iota_{\nu}(w).
\end{flalign*}
Combining this with $P_k = C_{\mu,\nu,k} J_k$ we get the result.
\end{proof}

We consider the case $k = 0$. Then the formula simplifies to
$$P_0(A \otimes I)P_0^* (K_x)(\xi)  = A(\xi,x) (1 + \li{x} \xi)^{\nu}.$$
In the case where $k$ is maximal, $k = \mu$, our kernel simplifies to

\begin{flalign*}
& L(\xi, x) = \frac{\nu + 1}{\nu - \mu + 1} \int_{\C^3} (1 + w \li{x})^{\nu - \mu} (u - w)^{\mu} \li{X(u)} X(z) \cdot
\\ & (\li{z} - \li{w})^{\mu} (1 + \xi \li{w})^{\nu - \mu} d \iota_{\mu}(u) d \iota_{\mu}(z) d \iota_{\nu}(w).
\end{flalign*}
Then we note that $(u - w)^{\mu} = (-w)^{\mu} (1 - \frac{u}{w})^{\mu}$, thus by the reproducing kernel property the above becomes

$$L(\xi, x) = \frac{\nu + 1}{\nu - \mu + 1} \int_{\C} | (g \cdot X)(w)|^2 (1 + w \li{x})^{\nu - \mu} (1 + \xi \li{w})^{\nu - \mu} d \iota_{\nu}(w),$$
where $g = \begin{pmatrix} 0 & 1 \\ -1 & 0 \end{pmatrix}$. We want to calculate the trace of the functional calculus,
$$\frac{1}{\nu} \Tr(\phi(\mathcal{T}_{\mu,k}^{\nu}(A)))$$
for a continuous function $\phi$. We start with $\phi(x) = x^n $, where $n = 1,2,\dots$, i.e. the trace 
$$\frac{1}{\nu} \Tr(\mathcal{T}_{\mu,k}^{\nu}(A)^n)$$
for any $n$. As $R_{\mu}^*$ is surjective, it is enough to calculate
$$ \lim_{\nu \rightarrow \infty} \frac{1}{\nu} \Tr(\phi(\mathcal{T}_{\mu,k}^{\nu}(R_{\mu}^*(f)))).$$
It will turn out this, and in particular $R_{\mu + \nu - 2k} \mathcal{T}_{\mu,k}^{\nu} R_{\mu}^*(f)$, will be easier to handle.

\begin{proposition}
We let
$$E_{\mu,k}^{\nu}(f) := R_{\mu + \nu - 2k} \mathcal{T}_{\mu, k}^{\nu} R_{\mu}(f) .$$
Then
\begin{flalign*}
& E_{\mu,k}^{\nu}(f) = C_{\mu,\nu,k}^2 \sum_{l = 0}^k (-1)^{k-l} \binom{\nu - k}{k-l} \frac{(\nu - k + l)! k !}{\nu! l!} B_{\mu - l}(f).
\end{flalign*}
\end{proposition}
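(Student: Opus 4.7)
The strategy is to compute the integral kernel of $R_{\mu+\nu-2k}\mathcal{T}_{\mu,k}^\nu R_\mu^*(f)$ explicitly and recognise the outcome as a linear combination of Berezin transforms. First I would apply Proposition \ref{gammakernel} with $A = R_\mu^*(f)$, substituting $R_\mu^*(f)(z,u) = \frac{1}{\mu+1}\int_\C (1+z\li{\eta})^\mu(1+\eta\li{u})^\mu f(\eta)\, d\iota_\mu(\eta)$. Among the four integration variables, the $z$- and $u$-integrals collapse via the reproducing kernel property of $\Hc_\mu$: pairing $(1+z\li{\eta})^\mu$ with the antiholomorphic factor $(\li{z}-\li{w})^k(1+\xi\li{z})^{\mu-k}$ gives $(\li{\eta}-\li{w})^k(1+\xi\li{\eta})^{\mu-k}$, and similarly for $u$. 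Setting $x = \xi$ and dividing by $(1+|\xi|^2)^{\mu+\nu-2k}$ leaves
\begin{equation*}
E_{\mu,k}^\nu(f)(\xi) = \frac{C^2_{\mu,\nu,k}}{\mu+1}\int_{\C^2}\frac{|1+\eta\li{\xi}|^{2(\mu-k)}|1+w\li{\xi}|^{2(\nu-k)}|\eta-w|^{2k}}{(1+|\xi|^2)^{\mu+\nu-2k}}f(\eta)\, d\iota_\nu(w)\, d\iota_\mu(\eta).
\end{equation*}

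Next I would expand $|\eta-w|^{2k}$ using the identity $|\eta-w|^2 = (1+|\eta|^2)(1+|w|^2) - |1+\eta\li{w}|^2$, which yields the binomial sum $|\eta-w|^{2k} = \sum_{l=0}^k\binom{k}{l}(-1)^{k-l}[(1+|\eta|^2)(1+|w|^2)]^l|1+\eta\li{w}|^{2(k-l)}$. The factors $(1+|\eta|^2)^l,(1+|w|^2)^l$ absorb into the measures $d\iota_\mu(\eta), d\iota_\nu(w)$, effectively lowering the weights to $\mu-l, \nu-l$. The inner $w$-integral then becomes the squared $\Hc_{\nu-l}$-norm of the polynomial $(1+w\li{\xi})^{\nu-k}(1+w\li{\eta})^{k-l}$, which expanded in monomials and evaluated via $\langle w^i, w^j\rangle_{\nu-l} = \delta_{ij}\binom{\nu-l}{i}^{-1}$ gives a hypergeometric sum that Lemma \ref{Gausshgf} evaluates in closed form (exactly as in the proof of Proposition \ref{intertwinerconstant}). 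The outcome re-assembles as a finite linear combination of terms $|1+\eta\li{\xi}|^{2(k-l')}$ with appropriate $(1+|\xi|^2)$ and $(1+|\eta|^2)$ powers; combined with the prefactor $|1+\eta\li{\xi}|^{2(\mu-k)}/(1+|\xi|^2)^{\mu+\nu-2k}$, these produce Berezin kernels for $B_{\mu-l'}(f)(\xi)$. Swapping the order of the expansion index $l$ and the emerging index $l'$, then reindexing, yields a single sum $\sum_{l'=0}^k c_{l'}B_{\mu-l'}(f)$, and a final manipulation using $(-\nu)_k = (-1)^k\nu!/(\nu-k)!$ matches $c_{l'}$ with the claimed coefficient.

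The hard part will be the $w$-integral together with the combinatorial reindexing. Contributions from different expansion indices $l$ cross-contribute to the same $B_{\mu-l'}$: as a sanity check, the $l=k$ expansion term by itself yields $C_{\mu,\nu,k}^2\,\frac{\nu+1}{\nu-k+1}B_{\mu-k}(f)$, whose excess over the target coefficient $C_{\mu,\nu,k}^2$ must be cancelled by contributions from $l<k$. Carrying out this cancellation rigorously via Lemma \ref{Gausshgf} and the requisite Pochhammer arithmetic is the technical heart of the proof, while the $z$, $u$ reductions and the binomial expansion above are routine.
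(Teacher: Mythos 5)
Your proposal follows the paper's skeleton up to and including the key intermediate formula: applying Proposition \ref{gammakernel} with $A=R_\mu^*(f)$, collapsing the $z$- and $u$-integrals by the reproducing property, and arriving at exactly the double integral $\frac{C_{\mu,\nu,k}^2}{\mu+1}\int_{\C^2}\frac{|1+\eta\li{\xi}|^{2(\mu-k)}|1+w\li{\xi}|^{2(\nu-k)}|\eta-w|^{2k}}{(1+|\xi|^2)^{\mu+\nu-2k}}f(\eta)\,d\iota_\nu(w)\,d\iota_\mu(\eta)$ that the paper also derives. Your sanity check on the $l=k$ term is correct (it does give $C_{\mu,\nu,k}^2\frac{\nu+1}{\nu-k+1}B_{\mu-k}(f)$). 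Where you diverge is in the combinatorial phase: the paper first evaluates at $\xi=0$ and invokes $SU(2)$-invariance of $R$, $R^*$ and $\mathcal{T}_{\mu,k}^{\nu}$ to recover the identity everywhere (this is flagged in the remark preceding the proof), which reduces the $w$-integral to $\nm{(s-w)^k}_{\Hc_\nu}^2=\sum_i\binom{k}{i}^2\binom{\nu}{i}^{-1}|s|^{2(k-i)}$ and then converts $|s|^{2i}=((1+|s|^2)-1)^i$ into shifted Berezin weights, with a single Chu--Vandermonde summation (Lemma \ref{Gausshgf}) at the end. You instead work at general $\xi$ and expand $|\eta-w|^{2k}$ via $(1+|\eta|^2)(1+|w|^2)-|1+\eta\li{w}|^2$.

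The one step that does not go through as you state it is the claim that Lemma \ref{Gausshgf} evaluates the general-$\xi$ inner $w$-integral ``in closed form, exactly as in Proposition \ref{intertwinerconstant}.'' The quantity $\nm{(1+w\li{\xi})^{\nu-k}(1+w\li{\eta})^{k-l}}^2_{\Hc_{\nu-l}}$ is, by covariance, $(1+|\xi|^2)^{\nu-k}(1+|\eta|^2)^{k-l}$ times a hypergeometric \emph{polynomial in the invariant} $t=\frac{|1+\eta\li{\xi}|^2}{(1+|\eta|^2)(1+|\xi|^2)}$, not a ${}_2F_1$ evaluated at argument $1$, so Lemma \ref{Gausshgf} does not apply to it directly; you still need to re-expand that polynomial in powers of $t$ (equivalently, to perform the paper's $((1+|s|^2)-1)^i$ rearrangement) before the Gauss/Chu--Vandermonde summation closes the coefficients. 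Once each power $t^j$ is isolated it does produce a Berezin kernel $B_{\mu-k+j}$, so your plan is structurally sound, but it carries an extra layer of reindexing that disappears entirely if you first set $\xi=0$ and use transitivity of the $SU(2)$-action, as the paper does. I would recommend adding that reduction; with it, your expansion and the paper's become two organizations of the same two combinatorial steps.
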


\begin{remark}
We make a general remark on $SU(2)$-invariance. We see directly that $R_{\mu}$, $R_{\mu}^*$ and $\mathcal{T}_{\mu,k}^{\nu}$ are $SU(2)$-invariant. Hence it will often be enough to ascertain an identity in $0$ to find it for the whole space as $SU(2)$ acts transitively on $\mathbb{CP}^1$. The following proof will be an example of that. We also know that any $\Hc_{\mu} \otimes \Hc_{\nu}$ and $L^2(\mathbb{CP}^1)$ split multiplicity free by Equations (\ref{tensorprodmult}) and (\ref{multfree}). As $R_{\mu}$, $R_{\mu}^*$ and $\mathcal{T}_{\mu,k}^{\nu}$ are $SU(2)$-invariant, Schur's Lemma says they act as scalars on the irreducible components.
\end{remark}

\begin{proof}
We first prove
\begin{flalign*}
& E_{\mu,k}^{\nu}(f)(0) = C_{\mu,\nu,k}^2 \sum_{l = 0}^k (-1)^{k-l} \binom{\nu - k}{k-l} \frac{(\nu - k + l)! k !}{\nu! l!} B_{\mu - l}(f)(0).
\end{flalign*}
Then by $SU(2)$-invariance we get by the above remark that

$$E_{\mu,k}^{\nu}(f) = C_{\mu,\nu,k}^2 \sum_{l = 0}^k (-1)^{k-l} \binom{\nu - k}{k-l} \frac{(\nu - k + l)! k !}{\nu! l!} B_{\mu - l}(f).$$
By Proposition \ref{gammakernel} for any operator $A$ on $\Hc_{\mu}$ we have

\begin{flalign*}
& (\mu + \nu - 2k + 1) \mathcal{T}_{\mu,k}^{\nu}(A)(x,y) = C_{\mu,\nu,k}^2 \int_{\C^3} (1 + u \li{y})^{\mu - k} (1 + w \li{y})^{\nu - k} \\ & (u - w)^k A(z,u) (\li{z} - \li{w})^k (1 + x \li{z})^{\mu - k} (1 + x \li{w})^{\nu - k} d \iota_{\mu}(u) d \iota_{\mu}(z) d \iota_{\nu}(w),
\end{flalign*}

Now we calculate $\mathcal{T}_{\mu,k}^{\nu} R_{\mu}^*(f)$. We get

\begin{flalign*}
& \mathcal{T}_{\mu,k}^{\nu} R_{\mu}^*(f)(x,y) = \frac{C_{\mu,\nu,k}^2}{\mu + 1} \int_{\C^4} (1 + u \li{y})^{\mu - k} (1 + w \li{y})^{\nu - k} (u - w)^k K(z,s)K(s,u) \cdot
\\ & (\li{z} - \li{w})^k (1 + x \li{z})^{\mu - k} (1 + x \li{w})^{\nu - k} f(s) d \iota_{\mu}(u) d \iota_{\mu}(z) d \iota_{\nu}(w) d \iota_{\mu}(s)
\\ & = \frac{C_{\mu,\nu,k}^2}{\mu + 1} \int_{\C^2} (1 + w \li{y})^{\nu - k} (1 + s \li{y})^{\mu - k} (s - w)^k  (\li{s} - \li{w})^k \cdot
\\ & (1 + x \li{s})^{\mu - k} (1 + x \li{w})^{\nu - k} f(s) d \iota_{\nu}(w) d \iota_{\mu}(s).
\end{flalign*}
Applying $R_{\mu + \nu - 2k}$ gives us

\begin{flalign*}
& E_{\mu,k}^{\nu}(f)(z) = R_{\mu + \nu - 2k} \mathcal{T}_{\mu,k}^{\nu} R_{\mu}^*(f)(z) = \frac{C_{\mu,\nu,k}^2}{(\mu + 1)(1 + |z|)^{\mu + \nu - 2k}} \cdot
\\ & \int_{\C^2} |(1 + w \li{z})^{\nu - k} (1 + s \li{z})^{\mu - k} (s - w)^{k}|^2 f(s) d \iota_{\nu} (w) d \iota_{\mu}(s).
\end{flalign*}

We will now try to write $E_{\mu,k}^{\nu}$ as a sum of Berezin transforms. From the definition we see

\begin{flalign*}
& B_{\mu}(f)(0) = \int_{\C} \frac{f(s)}{(1 + |s|^2)^{\mu}} d \iota(s).
\end{flalign*}
By the orthogonality of the $\{ w^i \}$ we can evaluate $E_{\mu,k}^{\nu}(f)(0)$ as

\begin{flalign*}
E_{\mu,k}^{\nu}(f)(0) & = \frac{C_{\mu,\nu,k}^2}{\mu + 1} \int_{\C} |(s - w)^k|^2 f(s) d \iota_{\nu}(w) d \iota_{\mu}(s)
\\ & = C_{\mu,\nu,k}^2 \sum_{i=0}^{k} \binom{k}{i}^2 \binom{\nu}{i}^{-1} \int_{\C} \frac{|s^{k-i}|^2 f(s)}{(1 + |s|^2)^{\mu}} d \iota (s)
\\ & = C_{\mu,\nu,k}^2 \sum_{i=0}^{k} \binom{k}{i}^2 \binom{\nu}{k-i}^{-1} \int_{\C} \frac{|s^{i}|^2 f(s)}{(1 + |s|^2)^{\mu}} d \iota (s)
\\ & = C_{\mu,\nu,k}^2 \sum_{i=0}^{k} \binom{k}{i}^2 \binom{\nu}{k-i}^{-1} \int_{\C} \frac{(|s^2| + 1 - 1)^i f(s)}{(1 + |s|^2)^{\mu}} d \iota (s)
\\ & = C_{\mu,\nu,k}^2 \sum_{i=0}^{k} \binom{k}{i}^2 \binom{\nu}{k-i}^{-1} \sum_{l = 0}^i \binom{i}{l} (-1)^{i-l} \cdot
\\ & \int_{\C} \frac{(|s^2| + 1)^l f(s)}{(1 + |s|^2)^{\mu}} d \iota (s)
\\ & = C_{\mu,\nu,k}^2 \sum_{l=0}^{k} \sum_{i=l}^k \binom{k}{i}^2 \binom{\nu}{k-i}^{-1} \binom{i}{l} (-1)^{i-l} \cdot
\\ & \int_{\C} \frac{(|s^2| + 1)^l f(s)}{(1 + |s|^2)^{\mu}} d \iota (s)
\\ & = C_{\mu,\nu,k}^2 \sum_{l = 0}^k (\sum_{i=l}^k \binom{k}{i}^2 \binom{\nu}{k-i}^{-1} \binom{i}{l} (-1)^{i-l}) B_{\mu - l}(f)(0).
\end{flalign*}
Now we study

$$\sum_{i=l}^k \binom{k}{i}^2 \binom{\nu}{k-i}^{-1} \binom{i}{l} (-1)^{i-l}.$$
We note that

\begin{flalign*}
& \sum_{i=l}^k \binom{k}{i}^2 \binom{\nu}{k-i}^{-1} \binom{i}{l} (-1)^{i-l}
\\ & = \sum_{i=l}^k (\frac{k!}{i!(k-i)})^2 \frac{(\nu - k + i)!(k-i)!}{\nu!} \frac{i!}{l!(i-l)!} (-1)^{i-l}
\\ & = \frac{k!}{\nu! l!} \sum_{i=0}^{k-l} \frac{k! (\nu - k + i + l)!}{(i+l)!(k-i-l)! i!} (-1)^{i}
\\ & = \frac{k!}{\nu! l!} \sum_{i=0}^{k-l} \frac{ (-k)_{l + i} (\nu - k + l)! (\nu - k + l + 1)_i }{l! (l+1)_i i!} (-1)^{l + i} (-1)^{i}
\\ & = (-1)^l \frac{k! (-k)_l (\nu - k + l)!}{\nu! (l!)^2} \sum_{i=0}^{k-l} \frac{(-k + l)_i (\nu - k + l + 1)_i}{i! (l+1)_i}
\\ & = \frac{ (k!)^2 (\nu - k + l )!}{\nu! (l!)^2 (k-l)!} {}_2F_1(-k + l, \nu - k + l + 1, l+1,1)
\\ & = \frac{(\nu - k + l )! (k!)^2}{\nu! (l!)^2 (k-l)!} \frac{(k- \nu)_{k-l}}{(l+1)_{k-l}}
\\ & = \frac{(\nu - k + l )! k! (k - \nu)_{k-l}}{\nu! l! (k-l)!}
\\ & = (-1)^{k-l} \binom{\nu - k}{k-l} \frac{(\nu - k + l)! k !}{\nu! l!}.
\end{flalign*}
Here we have used the Gauss formula for hypergeometric functions, Lemma \ref{Gausshgf}. Hence we see that $E_{\mu,k}^{\nu}(f)(0)$ is

\begin{flalign*}
& E_{\mu,k}^{\nu}(f)(0) = C_{\mu,\nu,k}^2 \sum_{l = 0}^k (-1)^{k-l} \binom{\nu - k}{k-l} \frac{(\nu - k + l)! k !}{\nu! l!} B_{\mu - l}(f)(0).
\end{flalign*}
The result follows.
\end{proof}

The constant $C_{\mu,\nu,k}^2 = \frac{ (-\nu)_k (-\mu)_k}{k! (\mu + \nu - 2k + 2)_k}$ has the property
$$ \lim_{\nu \rightarrow \infty} C_{\mu, \nu, k}^2 = \binom{\mu}{k}.$$
Thus

\begin{flalign*}
\lim_{\nu \rightarrow \infty} E_{\mu,k}^{\nu}(f) & = \binom{\mu}{k} \sum_{l = 0}^k (-1)^{k-l} \frac{k !}{(k-l)! l!} B_{\mu - l}(f)
\\ & = \binom{\mu}{k} \sum_{l=0}^k (-1)^{k-l} \binom{k}{l} B_{\mu - l}(f).
\end{flalign*}
We make this a definition.

\begin{definition}
\label{Emukdef}
$E_{\mu,k}(f) := \binom{\mu}{k} \sum_{l=0}^k (-1)^{k-l} \binom{k}{l} B_{\mu - l}(f).$
\end{definition}
Note the convergence $\lim_{\nu \rightarrow \infty} E_{\mu,k}^{\nu}(f) = E_{\mu,k}$ is uniform. We are interested in the case when $R_{\mu}^*(f) \geq 0$, for which we prove some bounds.

\begin{lemma}
\label{Ebounded}
For $f \in L^2(B_1)$, $z \in \C$ and $R_{\mu}^*(f) \geq 0$
$$ 0 \leq E_{\mu,k}(f)(z) \leq \Tr(R_{\mu}^*(f)).$$
\end{lemma}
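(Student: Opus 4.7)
The plan is to prove both inequalities first for the pre-limit quantity $E_{\mu,k}^{\nu}(f)(z) = R_{\mu+\nu-2k}(\mathcal{T}_{\mu,k}^{\nu}(R_{\mu}^*(f)))(z)$, uniformly in $\nu$, and then pass to the limit using the uniform convergence $E_{\mu,k}^{\nu}(f) \to E_{\mu,k}(f)$ recorded after Definition \ref{Emukdef}. For the lower bound, the completely positive map $\mathcal{T}_{\mu,k}^{\nu}$ sends the positive operator $R_{\mu}^*(f)$ to a positive operator on $\Hc_{\mu+\nu-2k}$. For any positive $A$ on a reproducing kernel Hilbert space, the diagonal of its kernel is $A(z,z) = \langle A K_z, K_z \rangle \geq 0$, so dividing by $K(z,z) = (1+|z|^2)^{\mu+\nu-2k} > 0$ gives $R_{\mu+\nu-2k}(A)(z) \geq 0$, and the inequality survives the limit.

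For the upper bound, the key additional observation is that $\mathcal{T}_{\mu,k}^{\nu}$ is \emph{unital}: from the normalization in Proposition \ref{intertwinerconstant} we have $P_k P_k^* = C_{\mu,\nu,k}^2 J_k J_k^* = I_{\mu+\nu-2k}$, so $\mathcal{T}_{\mu,k}^{\nu}(I) = P_k(I \otimes I)P_k^* = I$. A unital completely positive map between matrix algebras is contractive in the operator norm, so $\|\mathcal{T}_{\mu,k}^{\nu}(R_{\mu}^*(f))\|_{op} \leq \|R_{\mu}^*(f)\|_{op}$. Combining the reproducing-kernel Cauchy--Schwarz estimate $\langle A K_z, K_z \rangle \leq \|A\|_{op} \|K_z\|^2$ (for positive $A$) with the trivial bound $\|B\|_{op} \leq \Tr(B)$ for positive $B$ on a finite-dimensional space, we obtain the chain
$$E_{\mu,k}^{\nu}(f)(z) \leq \|\mathcal{T}_{\mu,k}^{\nu}(R_{\mu}^*(f))\|_{op} \leq \|R_{\mu}^*(f)\|_{op} \leq \Tr(R_{\mu}^*(f)),$$
and letting $\nu \to \infty$ concludes the proof.

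There is no serious obstacle; the argument is a short chain of standard inequalities about completely positive maps and reproducing kernels. The only point that requires real care is that the hypothesis $R_{\mu}^*(f) \geq 0$ has to be used through operator positivity and never through a pointwise sign condition on $f$, and the unital identity $\mathcal{T}_{\mu,k}^{\nu}(I) = I$ (which packages the normalization of $C_{\mu,\nu,k}$) is precisely what upgrades a trace-type bound to the pointwise bound $\Tr(R_{\mu}^*(f))$.
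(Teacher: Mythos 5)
Your proof is correct, but the upper bound is obtained by a genuinely different route from the paper's. For the lower bound you and the author do essentially the same thing: positivity of the channel plus nonnegativity of the diagonal of a positive kernel (the paper phrases it via $\int |X(s)|^2 f(s)\, d\iota_\mu(s) = (\mu+1)\langle R_\mu^*(f)X, X\rangle \ge 0$ applied to the explicit integral for $E_{\mu,k}^{\nu}(f)(0)$). For the upper bound, the paper instead computes the limit exactly, $E_{\mu,k}(f)(0) = \binom{\mu}{k}\frac{1}{\mu+1}\int_{\C}|s^k|^2 f(s)\, d\iota_\mu(s)$, and then bounds this single nonnegative term by the full sum over $k$, which telescopes via $\sum_{k=0}^{\mu}\binom{\mu}{k}|s|^{2k} = (1+|s|^2)^{\mu}$ to give exactly $\Tr(R_\mu^*(f))$; this has the side benefit of exhibiting the identity $\sum_{k=0}^{\mu} E_{\mu,k}(f)(z) = \Tr(R_\mu^*(f))$. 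You instead use unitality of the channel ($P_k P_k^* = I$ by the normalization in Proposition \ref{intertwinerconstant}, hence $\mathcal{T}_{\mu,k}^{\nu}(I)=I$ and $\nm{\mathcal{T}_{\mu,k}^{\nu}(A)}\le\nm{A}$ for positive $A$), the reproducing-kernel bound $\langle A K_z, K_z\rangle \le \nm{A}\,\nm{K_z}^2$, and $\nm{A}\le \Tr(A)$ for $A\ge 0$. This is structurally cleaner, gives the bound uniformly in $\nu$ before passing to the limit, and is in fact the same mechanism the author uses later (in the remark preceding the final theorem) to bound the eigenvalues of $\mathcal{T}_{\mu,k}^{\nu}(R_\mu^*(f))$ by $\nm{R_\mu^*(f)}\le\Tr(R_\mu^*(f))$ — so your argument is fully consistent with the paper, just deployed one lemma earlier. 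Both proofs invoke $SU(2)$-transitivity or the uniform convergence $E_{\mu,k}^{\nu}(f)\to E_{\mu,k}(f)$ to pass from the point $0$ (respectively from finite $\nu$) to the general statement; no gap either way.
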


\begin{proof}
We have for $X \in \Hc_{\mu}$
$$ \int_{\C} |X(s)|^2 f(s) d \iota_{\mu}(s) = (\mu + 1) \langle R_{\mu}^*(f)(X), X \rangle \geq 0.$$
Thus we find that
$$ E_{\mu,k}^{\nu}(f)(0) = \frac{C_{\mu,\nu,k}^2}{(\mu + 1)} \int_{\C^2} |(s - w)^k|^2 f(s) d \iota_{\nu}(w) d \iota_{\mu}(s) \geq 0,$$
and that for all $0 \leq j \leq \mu$
$$ \int_{\C} |s^j|^2 f(s) d \iota_{\mu}(s) = (\mu + 1) \langle R_{\mu}^*(f)(s^j), s^j \rangle \geq 0.$$
We also see that
\begin{flalign*}
E_{\mu,k}^{\nu}(f)(0) & = \frac{C_{\mu,\nu,k}^2}{(\mu + 1)} \int_{\C^2} |(s - w)^k|^2 f(s) d \iota_{\nu}(w) d \iota_{\mu}(s)
\\ & = \frac{C_{\mu,\nu,k}^2}{(\mu + 1)} \sum_{j=0}^k \binom{k}{j}^2 \binom{\nu}{k-j}^{-1} \int_{\C} |s^j|^2 f(s) d \iota_{\mu}(s).
\end{flalign*}
Taking the limit we see
\begin{flalign*}
E_{\mu,k}(f)(0) & = \lim_{\nu \rightarrow \infty} E_{\mu,k}^{\nu}(f)(0)
\\ & = \lim_{\nu \rightarrow \infty} \frac{C_{\mu,\nu,k}^2}{(\mu + 1)} \sum_{j=0}^k \binom{k}{j}^2 \binom{\nu}{k-j}^{-1} \int_{\C} |s^j|^2 f(s) d \iota_{\mu}(s)
\\ & = \binom{\mu}{k} \frac{1}{\mu + 1} \int_{\C} |s^k|^2 f(s) d \iota_{\mu}(s)
\\ & \leq \sum_{k=0}^{\mu} \binom{\mu}{k} \frac{1}{\mu + 1} \int_{\C} |s^k|^2 f(s) d \iota_{\mu}(s) = \frac{1}{\mu + 1} \int_{\C} (1 + |s|^2)^{\mu} f(s) d \iota_{\mu}(s)
\\ & = \int_{\C} f(s) d \iota(s) = \Tr(R_{\mu}^*(f)).
\end{flalign*}
We conclude that $0 \leq E_{\mu,k}(f)(0) \leq \Tr(R_{\mu}^*(f))$. But as the action of $SU(2)$ is transitive and $R_{\mu}^*(g \cdot f) = g R_{\mu}^*(f) g^{-1}$, which preserves the trace and positivity, we get that for any $z \in \C$
$$0 \leq E_{\mu,k}(f)(z) \leq \Tr(R_{\mu}^*(f)).$$
\end{proof}

\section{Functional calculus of quantum channels}
\label{funccalc}

We go on to describe the functional calculus of
$$\mathcal{T}_{\mu,k}^{\nu}(R_{\mu}^*(f)) = R_{\mu + \nu - 2k}^{-1} E_{\mu,k}^{\nu} (f).$$
for any $f \in C(\mathbb{CP}^1)$, where $R_{\mu + \nu - 2k}^{-1}$ is defined on $\mathrm{Im}(R_{\mu + \nu - 2k})$. We recall from Remark \ref{Rnuinjective} that $R_{\mu + \nu - 2k}^*$ restricted to this space is bijective, and we see

\begin{flalign*}
R_{\mu + \nu - 2k}^{-1} E_{\mu,k}^{\nu} & = R_{\mu + \nu - 2k}^* (R_{\mu + \nu - 2k}^*)^{-1} R_{\mu + \nu - 2k}^{-1} E_{\mu,k}^{\nu}
\\ & = R_{\mu + \nu - 2k}^* B_{\mu + \nu - 2k}^{-1} E_{\mu,k}^{\nu},
\end{flalign*}
where $B_{\mu + \nu - 2k}^{-1}$ is defined on $\mathrm{Im}(R_{\mu + \nu - 2k})$. Hence we see
\begin{equation}
\label{quantumchannelsberezininv}
\mathcal{T}_{\mu,k}^{\nu}(R_{\mu}^*(f)) = R_{\mu + \nu - 2k}^* B_{\mu + \nu - 2k}^{-1} E_{\mu,k}^{\nu}.
\end{equation}
We now want to prove that $((\nu + 1)B_{\nu})^{-1} f$ converges to $f$ as $\nu$ goes to infinity, where $f \in \mathrm{Im}(R_{\mu + \nu - 2k})$. We begin by proving $(\nu + 1) B_{\nu}$ is going to the identity.

\begin{proposition}
\label{berezintoid}
The sequence of operators $\{ (\nu + 1) B_{\nu}\}_{\nu}$, where $B_{\nu}$ is the Berezin transform, convergences to the indentity in the strong operator topology as an operator on $C(\mathbb{CP}^1)$.
\end{proposition}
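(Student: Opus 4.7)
The plan is to use the multiplicity-free decomposition (\ref{multfree}) of $L^2(\mathbb{CP}^1)$ to diagonalize $(\nu+1)B_\nu$, reduce to scalar convergence on each isotypic component, and then extend by a density-plus-contraction argument. From Equations (\ref{Bnuconstant}) and (\ref{Bnu0}), $(\nu+1)B_\nu$ acts on $\widetilde{\Hc_{2k}}$ by the scalar
$$\lambda_\nu(k) \;=\; \frac{(\nu+1)!\,\nu!}{(\nu+k+1)!\,(\nu-k)!} \;=\; \prod_{j=1}^{k}\frac{\nu+1-j}{\nu+1+j}$$
when $k\le\nu$, and by $0$ when $k>\nu$. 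From the telescoped product it is immediate that $\lambda_\nu(k)\in[0,1]$ and that $\lambda_\nu(k)\to 1$ as $\nu\to\infty$ for every fixed $k$.

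Next I would establish the uniform operator bound $\|(\nu+1)B_\nu\|_{C(\mathbb{CP}^1)\to C(\mathbb{CP}^1)}=1$. The integral kernel $(\nu+1)\frac{(1+z\li{s})^\nu(1+\li{z}s)^\nu}{(1+|z|^2)^\nu(1+|s|^2)^\nu}$ is nonnegative, so the operator norm equals $\sup_z(\nu+1)B_\nu(1)(z)$; but $1\in\widetilde{\Hc_0}$ and $\lambda_\nu(0)=1$ give $(\nu+1)B_\nu(1)\equiv 1$, whence the norm is exactly $1$.

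On the dense subspace $C^\infty(\mathbb{CP}^1)$, I would use the expansion $f=\sum_{k\ge 0}f_k$ with $f_k\in\widetilde{\Hc_{2k}}$ stated just after Decomposition~(\ref{multfree}), interpreting the absolute convergence as sup-norm absolute convergence $\sum_k\|f_k\|_\infty<\infty$ (a standard Peter-Weyl fact for smooth functions on the compact symmetric space $\mathbb{CP}^1$, following from Sobolev embedding together with the eigenvalue growth of the Laplacian on each isotypic component). Then
$$(\nu+1)B_\nu(f) \;=\; \sum_{k\ge 0}\lambda_\nu(k)\,f_k,$$
and since $\|\lambda_\nu(k)f_k\|_\infty\le\|f_k\|_\infty$ is dominated by a summable sequence while $\lambda_\nu(k)\to 1$ pointwise in $k$, the dominated convergence theorem for series yields $(\nu+1)B_\nu(f)\to f$ in the sup norm. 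For a general $f\in C(\mathbb{CP}^1)$, approximate by $g\in C^\infty(\mathbb{CP}^1)$ with $\|f-g\|_\infty<\varepsilon$ and close a standard three-$\varepsilon$ argument using the contraction bound from the previous step. The most delicate ingredient is the absolute sup-norm convergence of the $\widetilde{\Hc_{2k}}$-expansion for smooth $f$, needed to invoke dominated convergence term-by-term; I expect this to be the only non-routine point of the proof, and it is essentially quoted from the paper's discussion following (\ref{multfree}).
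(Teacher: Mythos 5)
Your argument is correct, but it is a genuinely different proof from the one in the paper. The paper argues directly with the integral kernel: it fixes $\epsilon$, uses uniform continuity of $f$ on the compact space $\mathbb{CP}^1$, splits the integral defining $(\nu+1)B_\nu(f)(0)$ into a small ball $U$ around $0$ (where $|f(s)-f(0)|<\epsilon$) and its complement (where the normalized kernel $\tfrac{\nu+1}{(1+|s|^2)^\nu}$ tends to $0$ uniformly), and then transports the estimate at $0$ to every point by $SU(2)$-invariance. That route is elementary and self-contained: it needs neither the eigenvalue formula (\ref{Bnuconstant}) nor any quantitative control on the isotypic expansion of $f$. Your route instead diagonalizes $(\nu+1)B_\nu$ via (\ref{Bnuconstant})--(\ref{Bnu0}), and your computations are right: the scalar on $\widetilde{\Hc_{2k}}$ is indeed $\lambda_\nu(k)=\prod_{j=1}^{k}\tfrac{\nu+1-j}{\nu+1+j}\in[0,1]$ with $\lambda_\nu(k)\to 1$ for fixed $k$, and the positivity of the kernel together with $(\nu+1)B_\nu(1)\equiv 1$ gives the exact contraction bound $\nm{(\nu+1)B_\nu}_{C\to C}=1$, which the paper never states in this sharp form. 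What your approach buys is precisely this quantitative spectral picture (monotone eigenvalues, explicit rate $1-\lambda_\nu(k)=O(k^2/\nu)$ on each component); what it costs is the one ingredient you correctly flag as delicate, namely the absolute sup-norm summability $\sum_k\nm{f_k}_\infty<\infty$ for smooth $f$. The paper's phrase ``absolute convergence'' after (\ref{multfree}) does not specify the norm, so you cannot simply quote it; but the fact is standard and provable as you indicate, e.g.\ via rapid decay of $\nm{f_k}_{L^2}$ (integration by parts against powers of the invariant Laplacian, whose eigenvalue on $\widetilde{\Hc_{2k}}$ grows like $k^2$) combined with the reproducing-kernel bound $\nm{f_k}_\infty\le\sqrt{2k+1}\,\nm{f_k}_{L^2}$ for functions in a single isotypic component. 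With that point filled in, the interchange $(\nu+1)B_\nu\bigl(\sum_k f_k\bigr)=\sum_k\lambda_\nu(k)f_k$ is justified by continuity of $(\nu+1)B_\nu$, dominated convergence for the scalar series closes the smooth case, and your three-$\epsilon$ argument with the contraction bound handles general $f\in C(\mathbb{CP}^1)$. So the proposal is complete modulo that standard lemma, and is a legitimate alternative to the paper's approximate-identity argument.
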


\begin{proof}
Let $f \in C(\mathbb{CP}^1)$. First we look at $B_{\nu}(f)(0)$. We recall from Remark \ref{repsu2facts} that $\mathbb{CP}^1$ has a metric for which the action of $SU(2)$ is isometric. Let $\epsilon$ be given, now as $\mathbb{CP}^1$ is compact we can choose a $\delta > 0$ such that for $x,y \in \mathbb{CP}^1$ with $d(x,y) < \delta$ we have $|f(x) - f(y)| < \epsilon$. Let $U$ be the $\delta$-ball around $0$. Then we see

\begin{flalign*}
& | (\nu + 1)  B_{\nu}(f)(0) - f(0)|
\\ & = |(\nu + 1) \int_{\C} \frac{1}{(1 + |s|^2)^{\nu}} f(s) d \iota(s) - (\nu + 1) \int_{\C} \frac{1}{(1 + |s|^2)^{\nu}} f(0) d \iota(s)|
\\ & = | (\nu + 1) \int_{\C} \frac{1}{(1 + |s|^2)^{\nu}} (f(s) - f(0)) d \iota(s)|
\\ & \leq (\nu + 1) \int_{U} \frac{|f(s) - f(0)|}{(1 + |s|^2)^{\nu}} d \iota(s) + 2 \nm{f}_{\infty} \int_{\C \backslash U} \frac{ \nu + 1}{(1 + |s|^2)^{\nu}} d\iota(s).
\end{flalign*}

We also note that there is an $r > 0$ such that for all for $s \in \C \backslash U$ we have $s \geq r > 0$. We conclude that there is some $\nu_0 \in \mathbb{N}$ such that if $\nu \geq \nu_0$, then $| \frac{\nu + 1}{(1 + |s|^2)^{\nu}} | < \frac{\epsilon}{2 \nm{f}_{\infty}}$ for all $s \in \C \backslash U$. We then conclude that for $\nu \geq \nu_0$

\begin{flalign*}
& |(\nu + 1) B_{\nu}(f)(0) - f(0)|
\\ & \leq \epsilon (\nu + 1)  \int_{\C} \frac{1}{(1 + |s|^2)^{\nu}} d \iota(s) + \epsilon \int_{\C} d \iota(s) < 2 \epsilon.
\end{flalign*}
Then we see

\begin{flalign*}
& |(\nu + 1) B_{\nu}(f)(g \cdot 0) - f(g \cdot 0)| = | (\nu + 1) B_{\nu}(g^{-1} \cdot f)(0) - (g^{-1} \cdot f)(0)|.
\end{flalign*}
Now we see for $g \in SU(2)$ and $z \in U$ that $d(g \cdot 0, g \cdot z) < \delta$, so

$$|g^{-1} \cdot f(0) - g^{-1} \cdot f(z)| = |f(g \cdot 0) - f(g \cdot z)| < \epsilon.$$
It follows that for the same $\nu_0$, if $\nu \geq \nu_0$:
$$ |(\nu + 1)B_{\nu}(f)(g) - f(g)| = | (\nu + 1)B_{\nu}(g^{-1} \cdot f)(0) - (g^{-1} \cdot f) (0) | < 2 \epsilon.$$
We conclude that
$$ \lim_{\nu \rightarrow \infty} \nm{B_{\nu}(f) - f} = 0.$$
\end{proof}

Now we look at $( (\nu + 1) B_{\nu})^{-1}(f)$, where for $f \in \mathrm{Im}(R_{\mu})$ and $\nu \geq \mu$. Note that $\mathrm{Im}(R_{\mu})$ is a finite dimensional vector space. 

\begin{proposition}
\label{berezininverse}
For $f \in \mathrm{Im}(R_{\mu})$ we have that $((\nu + 1)B_{\nu})^{-1}(f)$ converges uniformly to $f$ as $\nu$ goes to infinity. In particular, the convergence of $( (\mu + \nu - 2k + 1) B_{\nu + \mu - 2k})^{-1} E_{\mu,k}^{\nu}(f)$ to $E_{\mu,k}(f)$ is uniform in $C(\mathbb{CP}^1)$.
\end{proposition}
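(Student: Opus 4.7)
The plan is to exploit the spectral decomposition of $B_\nu$ coming from the multiplicity-free decomposition (\ref{multfree}) together with Schur's Lemma: by Equation (\ref{Bnuconstant}), the operator $(\nu+1)B_\nu$ acts on $\widetilde{\Hc_{2j}}$ (for $j \leq \nu$) as the scalar
\[
c_\nu^{(j)} := (\nu+1)\,\frac{(\nu!)^2}{(\nu+j+1)!\,(\nu-j)!} = \prod_{i=1}^{j} \frac{\nu-i+1}{\nu+i+1},
\]
which for each fixed $j$ tends to $1$ as $\nu \to \infty$. Consequently, once $\nu \geq j$, the inverse $((\nu+1)B_\nu)^{-1}$ acts on $\widetilde{\Hc_{2j}}$ as the scalar $1/c_\nu^{(j)}$, which also tends to $1$.

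First I would observe that $\mathrm{Im}(R_\mu)$ sits inside the fixed finite-dimensional subspace $V_\mu := \bigoplus_{j=0}^{\mu} \widetilde{\Hc_{2j}}$, since $R_\mu$ is $SU(2)$-equivariant and $\Hc_\mu \otimes \li{\Hc_\mu} \cong \bigoplus_{j=0}^{\mu}\Hc_{2j}$. Writing $f = \sum_{j=0}^{\mu} f_j$ with $f_j \in \widetilde{\Hc_{2j}}$, the diagonal action of $((\nu+1)B_\nu)^{-1}$ on $V_\mu$ gives
\[
\nm{((\nu+1)B_\nu)^{-1}f - f}_\infty \leq \sum_{j=0}^{\mu} \Bigl|\tfrac{1}{c_\nu^{(j)}}-1\Bigr|\,\nm{f_j}_\infty,
\]
which is a finite sum whose every term vanishes as $\nu\to\infty$. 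This proves the first assertion.

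For the second assertion the key point is that $E_{\mu,k}^{\nu}(f)$ and $E_{\mu,k}(f)$ both lie in the $\nu$-independent space $V_\mu$: by Definition \ref{Emukdef} and its $\nu$-analogue each is a linear combination of Berezin transforms $B_{\mu-l}(f)$, and $B_{\mu-l}(f) \in \bigoplus_{j=0}^{\mu-l}\widetilde{\Hc_{2j}} \subseteq V_\mu$ by (\ref{Bnu0}). I would then apply the triangle inequality,
\begin{align*}
& \nm{((\mu{+}\nu{-}2k{+}1)B_{\mu+\nu-2k})^{-1} E_{\mu,k}^{\nu}(f) - E_{\mu,k}(f)}_\infty \\
& \qquad \leq \nm{((\mu{+}\nu{-}2k{+}1)B_{\mu+\nu-2k})^{-1}\bigl(E_{\mu,k}^{\nu}(f) - E_{\mu,k}(f)\bigr)}_\infty \\
& \qquad\quad + \nm{((\mu{+}\nu{-}2k{+}1)B_{\mu+\nu-2k})^{-1} E_{\mu,k}(f) - E_{\mu,k}(f)}_\infty.
\end{align*}
The second summand tends to $0$ by the first assertion applied with index $\mu+\nu-2k$ in place of $\nu$, to the fixed element $E_{\mu,k}(f)\in V_\mu$. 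For the first summand I would use that the sup-norm operator norm of $((\mu+\nu-2k+1)B_{\mu+\nu-2k})^{-1}$ restricted to $V_\mu$ is uniformly bounded in $\nu$ (its eigenvalues $1/c_{\mu+\nu-2k}^{(j)}$ on $\widetilde{\Hc_{2j}}$ converge to $1$ for each of the finitely many $j \leq \mu$, hence are bounded), combined with the uniform convergence $E_{\mu,k}^{\nu}(f) \to E_{\mu,k}(f)$ noted just after Definition \ref{Emukdef}.

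The main obstacle is purely bookkeeping: verifying that the inverses $B_\nu^{-1}$ only ever act on functions that already sit in a common, $\nu$-independent, finite-dimensional subspace $V_\mu$ of continuous functions. Once this is arranged, pointwise convergence of eigenvalues to $1$ upgrades automatically to uniform convergence in the sup norm thanks to equivalence of norms on $V_\mu$, and no hard analytic estimate is needed.
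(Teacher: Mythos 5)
Your proof is correct, but it takes a genuinely different route from the paper's. The paper never touches the explicit eigenvalues of $B_{\nu}$ here: it invokes Proposition \ref{berezintoid} to get $(\nu+1)B_{\nu}|_{\mathrm{Im}(R_{\mu})} \to I$ strongly, upgrades this to norm convergence by finite-dimensionality, and then inverts via a Neumann series $\sum_{n}(I-(\nu+1)B_{\nu}|_{\mathrm{Im}(R_{\mu})})^{n}$ to get a uniform bound on $\nm{((\nu+1)B_{\nu}|_{\mathrm{Im}(R_{\mu})})^{-1}}$; the second assertion is then handled by essentially the same triangle-inequality split you use. You instead diagonalize $(\nu+1)B_{\nu}$ on the multiplicity-free components $\widetilde{\Hc_{2j}}$ via Schur's Lemma and Equation (\ref{Bnuconstant}), observe that the scalars $c_{\nu}^{(j)}=\prod_{i=1}^{j}\frac{\nu-i+1}{\nu+i+1}$ tend to $1$ for each fixed $j\leq\mu$, and conclude directly. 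Your approach is more concrete: it bypasses both Proposition \ref{berezintoid} and the Neumann-series inversion, yields explicit rates, and incidentally sidesteps a small typo in the paper's bound ($\frac{1}{1+\nm{I-(\nu+1)B_{\nu}}}$ should be $\frac{1}{1-\nm{I-(\nu+1)B_{\nu}}}$). The paper's softer argument has the advantage of not depending on the exact eigenvalue formula and of reusing machinery already established. The only point worth making fully explicit in your write-up is the passage from the componentwise scalar bounds to a sup-norm operator bound on $V_{\mu}$ (the projections onto the $\widetilde{\Hc_{2j}}$ are sup-norm bounded because $V_{\mu}$ is a fixed finite-dimensional space of continuous functions), which you correctly flag via equivalence of norms.
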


\begin{remark}
We note that $C^{\infty}(\mathbb{CP}^1) \subseteq L^2(\mathbb{CP}^1) \cong \bigoplus_{k = 0}^{\infty} \Hc_{2k}$ and $B(\Hc_{\mu}) \cong \bigoplus_{k=0}^{\mu} \Hc_{2k}$, where $B_{\mu}$ is injective, so that $C^{\infty}(\mathbb{CP}^1) \supseteq \mathrm{Im}(R_{\mu}) \cong \bigoplus_{k=0}^{\mu} \Hc_{2k}$. Hence if $\nu \geq \mu$ we have
$$ \mathrm{Im}(R_{\mu}) \subseteq \mathrm{Im}(R_{\nu}).$$
We conclude that $B_{\nu}$ maps $\mathrm{Im}(R_{\mu})$ onto itself and $B_{\nu}^{-1}(f)$ is well-defined on $\mathrm{Im}(R_{\mu})$ when $\nu \geq \mu$.
\end{remark}

\begin{proof}
By Proposition \ref{berezintoid}
$$ \lim_{\nu \rightarrow \infty} (\nu+1) B_{\nu}|_{\mathrm{Im}(R_{\mu})} = I_{\nu}$$
in the strong operator topology. As $\mathrm{Im}(R_{\mu})$ is of fixed finite dimension, this convergence is also in the norm (the norm being the inherited $\nm{-}_{\infty}$ norm). Hence we have some $\nu_0$ such that when $\nu \geq \nu_0$:
$$ \nm{((\nu + 1)B_{\nu}|_{\mathrm{Im}(R_{\mu})})^{-1} - I} < \epsilon. $$
Thus we have that for $\nu \geq \nu_0$:
$$ ((\nu + 1) B_{\nu}|_{\mathrm{Im}(R_{\mu})})^{-1} = \sum_{n = 0}^{\infty} (I - (\nu+1) B_{\nu}|_{\mathrm{Im}(R_{\mu})})^n,$$
and we get that
$$\nm{ ( (\nu+1) B_{\nu}|_{\mathrm{Im}(R_{\mu})})^{-1}} \leq \frac{1}{1 + \nm{I - (\nu + 1) B_{\nu}||_{\mathrm{Im}(R_{\mu})}}},$$
which is bounded. Hence we have that
\begin{flalign*}
& \nm{f - ( (\nu+1) B_{\nu})^{-1}(f)}_{\infty} = \nm{( (\nu+1) B_{\nu}|_{\mathrm{Im}(R_{\mu})})^{-1}( (\nu+1) B_{\nu}(f) - f)}_{\infty}
\\ & \leq \nm{((\nu+1) B_{\nu}|_{\mathrm{Im}(R_{\mu})})^{-1}} \cdot \nm{ (\nu+1) B_{\nu}(f) - f }_{\infty}.
\end{flalign*}
This proves that $( (\nu + 1) B_{\nu})^{-1}(f)$ converges uniformly to $f$.

For the second part, we know that $E_{\mu,k}^{\nu}(f)$ is going to $E_{\mu,k}(f)$ uniformly, so using the fact that $E_{\mu,k}^{\mu+\nu}(f), E_{\mu,k}(f) \in \sum_{l=0}^k \mathrm{Im}(B_{\mu - l}) = \mathrm{Im}(R_{\mu})$ we get

\begin{flalign*}
& \nm{( (\nu + \mu - 2k) B_{\mu + \nu - 2k})^{-1} E_{\mu,k}^{\nu}(f) - E_{\mu,k}(f)}_{\infty}
\\ & \leq \nm{(( \nu + \mu - 2k) B_{\nu + \mu - 2k})^{-1}E_{\mu,k}^{\nu}(f) - E_{\mu,k}^{\nu}(f)}_{\infty} + \nm{E_{\mu,k}^{\nu}(f) - E_{\mu,k}(f)}_{\infty}
\\ & \leq \nm{ ((\nu + \mu - 2k) B_{\nu + \mu - 2k})^{-1} - I} \cdot \nm{E_{\mu,k}^{\nu}(f)}_{\infty} + \nm{E_{\mu,k}^{\nu}(f) - E_{\mu,k}(f)}_{\infty}.
\end{flalign*}
The result follows.
\end{proof}

We want to prove convergence of $R_{\nu}( R_{\nu}^*(f)^n)$ now.

\begin{lemma}
\label{boundedkern}
For each $n$ in $\mathbb{N}$ the integral
$$I_n(\nu) := (\nu + 1)^n \int_{\C^n} \left\lvert \frac{(1 + s_1 \li{s_2}) \dots (1 + s_{n-1} \li{s_n})}{(1 + |s_1|^2) \dots (1 + |s_n|^2)} \right\lvert^{\nu} d \iota(s_1) \dots d \iota(s_n)$$
is bounded in $\nu \in \mathbb{N}$. More precisely, $\forall_{\nu \in \mathbb{N}} \ I_n(\nu) \leq 2^{2n}$.
\end{lemma}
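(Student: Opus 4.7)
The plan is to rewrite the integrand as a product of a single auxiliary kernel along a chain, and then integrate vertex by vertex. I introduce the $SU(2)$-invariant function $\phi(s,t)=|1+s\li{t}|^2/[(1+|s|^2)(1+|t|^2)]\in[0,1]$ and the half-power kernel
$h_\nu(s,t):=\phi(s,t)^{\nu/2}=|1+s\li{t}|^\nu/[(1+|s|^2)(1+|t|^2)]^{\nu/2}$.
Adjoining fictitious endpoints $s_0:=0=:s_{n+1}$ and matching exponents at each $s_j$, one checks directly that
\[
\frac{\prod_{i=1}^{n-1}|1+s_i\li{s_{i+1}}|^\nu}{\prod_{j=1}^n (1+|s_j|^2)^\nu}=\prod_{i=0}^n h_\nu(s_i,s_{i+1}),
\]
because the endpoint factors $h_\nu(0,s_1)=(1+|s_1|^2)^{-\nu/2}$ and $h_\nu(s_n,0)=(1+|s_n|^2)^{-\nu/2}$ supply exactly the missing halves of the weights at $s_1$ and $s_n$ not carried by the chain edges.

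Next I record two integral identities for $h_\nu$. Applying the reproducing kernel of $\Hc_\nu$ at a point $a\in\C$ yields $\nm{(1+\li{a}s)^\nu}_{\Hc_\nu}^2=(1+|a|^2)^\nu$, which after unwinding the normalization between $d\iota_\nu$ and $d\iota$ reads $\int_\C \phi(a,s)^\nu\,d\iota(s)=1/(\nu+1)$; coupled with the Cauchy-Schwarz inequality this gives, uniformly in $a,b\in\C$,
\[
\int_\C h_\nu(a,s)\,h_\nu(s,b)\,d\iota(s)\le\frac{1}{\nu+1}.
\]
Separately, the joint invariance $\phi(g\cdot s,g\cdot t)=\phi(s,t)$ makes $\int_\C h_\nu(a,s)\,d\iota(s)$ independent of $a$; evaluating at $a=0$ identifies it with $\int_\C(1+|s|^2)^{-\nu/2}\,d\iota(s)=2/(\nu+2)$.

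I then integrate $s_n,s_{n-1},\dots,s_1$ in this order. The first integration is the only nontrivial one: $s_n$ appears precisely in the two factors $h_\nu(s_{n-1},s_n)h_\nu(s_n,0)$, and the Cauchy-Schwarz bound above delivers $\le 1/(\nu+1)$, independent of $s_{n-1}$. Thereafter each remaining variable $s_j$ ($j=n-1,\dots,1$) sits in exactly one factor $h_\nu(s_{j-1},s_j)$, so each such integration contributes $2/(\nu+2)$. Combining everything with the external $(\nu+1)^n$,
\[
I_n(\nu)\le(\nu+1)^n\cdot\frac{1}{\nu+1}\cdot\Bigl(\frac{2}{\nu+2}\Bigr)^{n-1}=\Bigl(\frac{2(\nu+1)}{\nu+2}\Bigr)^{n-1}\le 2^{n-1}\le 2^{2n}.
\]

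The main pitfall to avoid is the naive pointwise bound $h_\nu\le 1$, which throws away all the concentration of the kernel and yields an estimate that blows up in $\nu$. The crucial structural observation is that inserting the fictitious endpoints creates a chain with exactly $n+1$ edges, of which one end can absorb a Cauchy-Schwarz step producing the sharp $1/(\nu+1)$ needed to match one of the $n$ external $(\nu+1)$-factors, while the remaining $n-1$ variables can be peeled off one by one with the weaker $2/(\nu+2)$ estimate.
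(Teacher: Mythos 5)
Your proof is correct, and it takes a genuinely different route from the paper. The paper restricts to even $\nu=2\kappa$, interprets $I_n(\nu)$ as $\nm{(1+s_1s_2)^{\kappa}\cdots(1+s_{n-1}s_n)^{\kappa}}^2_{\Hc_\nu\otimes\cdots\otimes\Hc_\nu}$, expands in the monomial basis, and iterates the combinatorial inequality $\sum_{i=0}^{\kappa}\binom{\kappa}{i}\binom{2\kappa}{i+j}^{-1}\le 2\binom{\kappa}{j}^{-1}$ to peel off one index at a time; odd $\nu$ is then handled by comparison with $\nu-1$, which is where the loss from $2^n$ to $2^{2n}$ occurs. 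You instead work directly at the level of integral kernels: the factorization of the integrand into the chain $\prod_{i=0}^{n}h_\nu(s_i,s_{i+1})$ with fictitious endpoints at $0$ is exact (each interior and boundary vertex receives two half-weights $(1+|s_j|^2)^{-\nu/2}$), the identity $\int_{\C}\phi(a,s)^{\nu}\,d\iota(s)=\tfrac{1}{\nu+1}$ is the reproducing-kernel computation $\nm{(1+\li{a}s)^{\nu}}^2_{\Hc_\nu}=(1+|a|^2)^{\nu}$ in disguise, the Cauchy--Schwarz step at the terminal vertex is uniform in $s_{n-1}$, and the $SU(2)$-invariance of $\phi$ together with $\int_{\C}(1+|s|^2)^{-\nu/2}d\iota(s)=\tfrac{2}{\nu+2}$ disposes of the remaining $n-1$ variables one at a time. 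This buys you two things: the argument is parity-free, so no even/odd case split is needed, and the resulting bound $I_n(\nu)\le\bigl(\tfrac{2(\nu+1)}{\nu+2}\bigr)^{n-1}\le 2^{n-1}$ is sharper than the paper's $2^{2n}$ (and is consistent with $I_1(\nu)=1$). Your concluding remark is also apt: both proofs must avoid the naive pointwise bound $h_\nu\le 1$, and your mechanism for converting kernel concentration into the factor $\tfrac{1}{\nu+1}$ is a single well-placed Cauchy--Schwarz rather than the paper's iterated binomial estimate.
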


\begin{proof}
It is clear when $n=1$, then the integrand is $(\frac{1}{1 + |s_1|^2})^{\nu}$. Assume now $n \geq 2$. We see for $\nu$ even, i.e. $\nu = 2 \kappa$
\begin{flalign*}
I_n(\nu) & = \int_{\C^n} |(1 + s_1 \li{s_2}) \dots (1 + s_{n-1} \li{s_n})|^{2 \kappa} d \iota_{\nu}(s_1) \dots d \iota_{\nu}(s_n)
\\ & = \nm{(1 + s_1 s_2)^{\kappa} \dots (1 + s_{n-1} s_n)^{\kappa}}_{\Hc_{\nu} \otimes \dots \otimes \Hc_{\nu}}^2
\\ & = \sum_{i_1, \dots, i_{n-1}} \binom{\kappa}{i_1}^2 \dots \binom{\kappa}{i_{n-1}}^2 \cdot
\\ & \binom{2 \kappa}{i_1}^{-1} \binom{2 \kappa}{i_1 + i_2}^{-1} \dots \binom{2 \kappa}{i_{n-2} + i_{n-1}}^{-1} \binom{2 \kappa}{i_{n-1}}^{-1}.
\end{flalign*}
Now we note that
\begin{flalign*}
& \frac{2 \kappa + 1}{\kappa + 1} \binom{\kappa}{j}^{-1} = \frac{2 \kappa + 1}{\kappa + 1} \nm{x^j}_{\kappa}^2 = (2 \kappa + 1) \int_{\C} \frac{|x|^{2j}}{(1 + |x|^{2})^{\kappa}} d \iota(x)
\\ & = \int_{\C} |x|^{2j} (1 + |x|^2)^{\kappa} d \mu_{2 \kappa}(x) = \sum_{i= 0}^{\kappa} \binom{\kappa}{i} \nm{x^{i+j}}_{2 \kappa}^2 = \sum_{i= 0}^{\kappa} \binom{\kappa}{i} \binom{2 \kappa}{i + j}^{-1},
\end{flalign*}
and thus
\begin{equation}
\label{fundineq}
\sum_{i= 0}^{\kappa} \binom{\kappa}{i} \binom{2 \kappa}{i + j}^{-1} \leq 2 \binom{\kappa}{j}^{-1}.
\end{equation}

We continue our calculations on $I_n(\nu)$. We have
\begin{flalign*}
& \sum_{i_1, \dots, i_{n-1}} \binom{\kappa}{i_1}^2 \dots \binom{\kappa}{i_{n-1}}^2 \cdot
\\ & \binom{2 \kappa}{i_1}^{-1} \binom{2 \kappa}{i_1 + i_2}^{-1} \dots \binom{2 \kappa}{i_{n-2} + i_{n-1}}^{-1} \binom{2 \kappa}{i_{n-1}}^{-1}
\\ & = \sum_{i_2, \dots, i_{n-1}} (\sum_{i_1 = 0}^{\kappa} \binom{\kappa}{i_1}^2 \binom{2 \kappa}{i_1}^{-1} \binom{2 \kappa}{i_1 + i_2}^{-1}) \binom{\kappa}{i_2}^2 \dots \binom{\kappa}{i_{n-1}}^2 \cdot
\\ & \binom{2 \kappa}{i_1 + i_2}^{-1} \dots \binom{2 \kappa}{i_{n-2} + i_{n-1}}^{-1} \binom{2 \kappa}{i_{n-1}}^{-1}.
\end{flalign*}
Now we see that
\begin{flalign*}
& \sum_{i_1 = 0}^{\kappa} \binom{\kappa}{i_1}^2 \binom{2 \kappa}{i_1}^{-1} \binom{2 \kappa}{i_1 + i_2}^{-1} \leq \sum_{i_1 = 0}^{\kappa} \binom{\kappa}{i_1} \binom{2 \kappa}{i_1}^{-1} \sum_{i_1=0}^{\kappa} \binom{\kappa}{i_1} \binom{2 \kappa}{i_1 + i_2}^{-1}
\\ & \leq 4 \binom{\kappa}{i_2}^{-1} \binom{\kappa}{0}^{-1} = 4 \binom{\kappa}{i_2}^{-1}.
\end{flalign*}
Thus we see applying Inequality \ref{fundineq} iteratively
\begin{flalign*}
& \sum_{i_2, \dots, i_{n-1}} (\sum_{i_1 = 0}^{\kappa} \binom{\kappa}{i_1}^2 \binom{2 \kappa}{i_1}^{-1} \binom{2 \kappa}{i_1 + i_2}^{-1}) \binom{\kappa}{i_2}^2 \dots \binom{\kappa}{i_{n-1}}^2 \cdot
\\ & \binom{2 \kappa}{i_1 + i_2}^{-1} \dots \binom{2 \kappa}{i_{n-2} + i_{n-1}}^{-1} \binom{2 \kappa}{i_{n-1}}^{-1}.
\\ & \leq 4 \sum_{i_2, \dots, i_{n-1}} \binom{\kappa}{i_2} \binom{\kappa}{i_3}^2 \dots \binom{\kappa}{i_{n-1}}^2 \cdot
\\ & \binom{2 \kappa}{i_2 + i_3}^{-1} \dots \binom{2 \kappa}{i_{n-2} + i_{n-1}}^{-1} \binom{2 \kappa}{i_{n-1}}^{-1}
\\ & = 4 \sum_{i_3, \dots, i_n} (\sum_{i_2=0}^{\kappa} \binom{\kappa}{i_2} \binom{2 \kappa }{i_2 + i_3}^{-1}) \binom{\kappa}{i_3}^2 \dots \binom{\kappa}{i_{n-1}}^2 \cdot
\\ & \binom{2 \kappa}{i_3 + i_4}^{-1} \dots \binom{2 \kappa}{i_{n-2} + i_{n-1}}^{-1} \binom{2 \kappa}{i_{n-1}}^{-1}
\\ & \leq 8 \sum_{i_3, \dots, i_n} \binom{\kappa}{i_3} \binom{\kappa}{i_4}^2 \dots \binom{\kappa}{i_{n-1}}^2 \cdot
\\ & \binom{2 \kappa}{i_3 + i_4}^{-1} \dots \binom{2 \kappa}{i_{n-2} + i_{n-1}}^{-1} \binom{2 \kappa}{i_{n-1}}^{-1}
\\ & \leq \dots \leq 2^{n-1} \sum_{i_{n-1}=0}^{\kappa} \binom{\kappa}{i_{n-1}} \binom{2 \kappa}{i_{n-1}}^{-1} \leq 2^{n} \binom{\kappa}{0}^{-1} = 2^n.
\end{flalign*}
Consider now the case that $\nu$ is odd. Then
\begin{flalign*}
I_n(\nu) & \leq \nu^n (\frac{\nu + 1}{\nu})^n \int_{\C^n} \left\lvert \frac{(1 + s_1 \li{s_2}) \dots (1 + s_{n-1} \li{s_n})}{(1 + |s_1|^2) \dots (1 + |s_n|^2)} \right\rvert^{\nu-1} d \iota(s_1) \dots d \iota(s_n)
\\ & \leq 2^n (\frac{\nu + 1}{\nu})^n \leq 2^{2n}.
\end{flalign*}
We conclude that indeed the desired integral is bounded.
\end{proof}

\begin{proposition}
\label{berezinxncpt} For $f \in C(\mathbb{CP}^1)$ we have the convergence 
$$\lim_{\nu \rightarrow \infty} \nm{(\nu + 1)^n R_{\nu}( R_{\nu}^*(f)^n) - f^n}_{\infty} = 0.$$
\end{proposition}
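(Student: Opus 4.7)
My plan is to reduce the statement to pointwise convergence at the base point $0 \in \mathbb{CP}^1$ using $SU(2)$-invariance, compute the quantity at $0$ explicitly as an iterated integral via the reproducing kernel identity, and then exploit the concentration of $d\iota_\nu$ near $0$ together with Lemma~\ref{boundedkern}.

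Because $R_\nu$, $R_\nu^*$, and operator composition are all $SU(2)$-equivariant, one has $R_\nu(R_\nu^*(g\cdot f)^n)(0) = R_\nu(R_\nu^*(f)^n)(g^{-1}\cdot 0)$. Since $SU(2)$ acts transitively and isometrically on $\mathbb{CP}^1$ (Remark~\ref{repsu2facts}), uniform convergence on $\mathbb{CP}^1$ will follow from an estimate at $z=0$ depending only on $\|f\|_\infty$ and the modulus of continuity of $f$ (both of which are $SU(2)$-invariant). At $z=0$, composing the integral kernel of $R_\nu^*(f)$, namely $\tfrac{1}{\nu+1}\int K^\nu(x,z)K^\nu(z,y)f(z)\,d\iota_\nu(z)$, with itself $n$ times and iteratively using the reproducing identity $\int K^\nu(x,s)K^\nu(s,y)\,d\iota_\nu(s) = K^\nu(x,y)$ to collapse the intermediate composition variables yields
\begin{equation*}
(\nu+1)^n R_\nu\bigl(R_\nu^*(f)^n\bigr)(0) = \int_{\C^n}\prod_{i=1}^{n-1}(1+z_i\overline{z_{i+1}})^\nu\prod_{i=1}^n f(z_i)\prod_{i=1}^n d\iota_\nu(z_i).
\end{equation*}
Specializing to $f\equiv 1$ gives $R_\nu^*(1) = I/(\nu+1)$ and $R_\nu(I)\equiv 1$, so this integral equals $1$; in particular the signed kernel $\prod(1+z_i\overline{z_{i+1}})^\nu \prod d\iota_\nu$ has total mass exactly $1$.

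Subtracting $f(0)^n$, the difference becomes $\int \prod(1+z_i\overline{z_{i+1}})^\nu\bigl(\prod f(z_i)-f(0)^n\bigr)\prod d\iota_\nu(z_i)$. The telescoping bound $|\prod f(z_i) - f(0)^n| \leq n\|f\|_\infty^{n-1}\max_i|f(z_i)-f(0)|$ reduces the problem to controlling, for each $i$, the integral of $|f(z_i)-f(0)|$ against the positive kernel $|\prod(1+z_j\overline{z_{j+1}})|^\nu\prod d\iota_\nu(z_j)$. Picking a chordal neighborhood $U$ of $0$ on which $|f - f(0)| < \epsilon$ (uniform continuity of $f$ on the compact manifold $\mathbb{CP}^1$), the piece coming from $\{z_i \in U\}$ is then dominated by $\epsilon \cdot I_n(\nu) \leq 4^n\epsilon$ thanks to Lemma~\ref{boundedkern}.

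The main obstacle is showing the complementary region $\{z_i \notin U\}$ contributes negligibly in the limit. For this I would apply the pointwise inequality $|1+z\overline{w}|^2 \leq (1+|z|^2)(1+|w|^2)$ (which follows from $(1+|z|^2)(1+|w|^2) - |1+z\overline{w}|^2 = |z-w|^2$) to a single kernel factor adjacent to $z_i$, obtaining a factor $(1+|z_i|^2)^{\nu/2}$ which, when combined with $d\iota_\nu(z_i) = (\nu+1)(1+|z_i|^2)^{-\nu}d\iota(z_i)$, produces the density $(\nu+1)(1+|z_i|^2)^{-\nu/2}d\iota(z_i)$, whose mass on $\{|z_i| \geq r\}$ decays like $O\bigl((\nu+1)(1+r^2)^{-\nu/2}\bigr) \to 0$. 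The remaining kernel factors are then controlled by iteratively applying exactly the Cauchy--Schwarz device used in the proof of Lemma~\ref{boundedkern}, which keeps their joint contribution bounded uniformly in $\nu$. Letting $\nu\to\infty$ and then $\epsilon\to 0$ (so $\delta\to 0$) completes the proof.
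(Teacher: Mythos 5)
Your overall architecture coincides with the paper's: reduce to the point $0$ by $SU(2)$-equivariance, write $(\nu+1)^n R_\nu(R_\nu^*(f)^n)(0)$ as the chain integral over $\C^n$, observe that the signed kernel has total mass $1$ (your derivation via $f\equiv 1$, $T_1=I$ and $R_\nu(I)\equiv 1$ is cleaner than the paper's orthogonality expansion), split into a neighbourhood of the origin and its complement, and control the near part by uniform continuity together with Lemma~\ref{boundedkern}. All of that is correct and matches the paper's proof in substance.

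The gap is in the tail estimate. First, for an interior index $1<i<n$ the variable $z_i$ has two neighbours in the chain; bounding only the factor $|1+z_{i-1}\overline{z_i}|^\nu\leq(1+|z_{i-1}|^2)^{\nu/2}(1+|z_i|^2)^{\nu/2}$ leaves $z_i$ coupled to $z_{i+1},\dots,z_n$, so you do not obtain a standalone density $(\nu+1)(1+|z_i|^2)^{-\nu/2}\,d\iota(z_i)$ whose mass on $\{|z_i|\geq r\}$ you can estimate separately; and bounding both adjacent factors cancels the $(1+|z_i|^2)^{-\nu}$ entirely, leaving no decay in $z_i$ at all. Second, the remaining factors are not controlled by ``exactly'' the device of Lemma~\ref{boundedkern}: after the break, one endpoint of each sub-chain carries the exponent $\nu/2$ rather than $\nu$, a configuration the lemma does not cover; and if by the Cauchy--Schwarz device you mean iterating the pointwise bound $|1+z\overline{w}|\leq(1+|z|^2)^{1/2}(1+|w|^2)^{1/2}$ over the remaining factors, that yields only $(\nu+1)^m\cdot O(\nu^{-1})=O(\nu^{m-1})$ for an $m$-variable sub-chain, which is unbounded. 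Both defects are repairable: one can redo the binomial computation of Lemma~\ref{boundedkern} for the modified chain, using $(\nu+1)\int_\C |z|^{2m}(1+|z|^2)^{-\kappa}\,d\iota(z)=\frac{2\kappa+1}{\kappa+1}\binom{\kappa}{m}^{-1}\leq 2\binom{\kappa}{m}^{-1}$ for $\nu=2\kappa$ and then iterating as in the lemma. But the simpler route, essentially the paper's, is to note that the normalized kernel $\prod_{j}|1+z_j\overline{z_{j+1}}|/\prod_j(1+|z_j|^2)$ extends continuously to the compact space $(\mathbb{CP}^1)^n$, is $\leq 1$ with equality only at the origin, hence is bounded by some $r<1$ on the closed set $\{z_i\notin U\}$, after which $(\nu+1)^n r^\nu\to 0$ kills the tail because $\iota^{\otimes n}$ is a probability measure.
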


\begin{proof}
Recall by $SU(2)$-invariance
\begin{flalign*}
& 
R_{\nu}( R_{\nu}^*(f)^n) (g \cdot 0) = R_{\nu}( g^{-1} (R_{\nu}^*(f)^n))(0)
\\ & =  R_{\nu}( (g^{-1}R_{\nu}^*(f)g)^n)(0) = R_{\nu}( R_{\nu}^*(g^{-1}f)^n)(0).
\end{flalign*}
Hence we can do the calculations by evaluating in $0$ as in the proof of Proposition \ref{berezintoid}. We see that the kernel is
\begin{flalign*}
& R_{\nu}^*(f)^n(x,y) = \int_{\C^{2n-1}} \frac{f(s_1) \dots f(s_n) (1 + x \li{s_1})^{\nu} (1 + s_1 \li{t_1})^{\nu} \dots (1 + s_n \li{y})^{\nu}}{(1 + |s_1|^2)^{\nu} \dots (1 + |s_n|^2)^{\nu}} \cdot 
\\ & (1 + t_1 \li{s_2})^{\nu} \dots (1 + t_{n-1} \li{s_{n}})^{\nu} d \iota(s_1) \dots d \iota(s_n) d \iota_{\nu}(t_1) \dots d \iota_{\nu}(t_{n-1})
\\ & = \int_{\C^n} \frac{f(s_1) \dots f(s_n) (1 + x \li{s_1})^{\nu} (1 + s_1 \li{s_2})^{\nu} \dots (1 + s_n \li{y})^{\nu}}{(1 + |s_1|^2)^{\nu} \dots (1 + |s_n|^2)^{\nu}} d \iota(s_1) \dots d \iota(s_n).
\end{flalign*}
Then
\begin{flalign*}
& (\nu + 1)^n R_{\nu} (R_{\nu}^*(f)^n)(x)
\\ & = (\nu + 1)^n \frac{R_{\nu}^*(f)^n(x,x)}{(1 + |x|^2)^{\nu}}
\\ & = \frac{(\nu + 1)^n}{1 + |x|^2} \int_{\C^n} \frac{f(s_1) \dots f(s_n) (1 + x \li{s_1})^{\nu} \dots (1 + s_n \li{x})^{\nu}}{(1 + |s_1|^2)^{\nu} \dots (1 + |s_n|^2)^{\nu}} d \iota (s_1) \dots d \iota(s_n)
\\ & = (\nu + 1)^{n} \int_{\C^n} \frac{f(s_1) \dots f(s_n) (1 + x \li{s_1})^{\nu} \dots (1 + s_n \li{x})^{\nu}}{(1 + |x|^2)^{\nu} (1 + |s_1|^2)^{\nu} \dots (1 + |s_n|^2)^{\nu} } d \iota(s_1) \dots d \iota(s_n)
\end{flalign*}
Evaluating in $0$
\begin{flalign*}
& (\nu + 1)^n R_{\nu} (R_{\nu}^*(f)^n)(0)
\\ & = (\nu + 1)^n \int_{\C^n} \frac{f(s_1) \dots f(s_n) (1 + s_1 \li{s_2})^{\nu} \dots (1 + s_{n-1} \li{s_n})^{\nu}}{(1 + |s_1|^2)^{\nu} \dots (1 + |s_n|^2)^{\nu} } d \iota(s_1) \dots d \iota(s_n).
\end{flalign*}
We also note that expanding $(1 + s_1 \li{s_2})^{\nu} \dots (1 + s_{n-1} \li{s_n})^{\nu}$, only the holomorphic parts of $s_1$ appear and by orthogonality they vanish. Then we only have the holomorphic factors of $s_2$, and iterating only the constant term is integrated over. We get
\begin{flalign*}
& (\nu + 1)^{n} \int_{\C^n} \frac{(1 + s_1 \li{s_2})^{\nu} \dots (1 + s_{n-1} \li{s_n})^{\nu}}{(1 + |s_1|^2)^{\nu} \dots (1 + |s_n|^2)^{\nu} } d \iota(s_1) \dots d \iota(s_n)
\\ & = (\nu + 1)^{n} \int_{\C^n} \frac{1}{(1 + |s_1|^2)^{\nu} \dots (1 + |s_n|^2)^{\nu} } d \iota(s_1) \dots d \iota(s_n) = 1
\end{flalign*}

Now we take the sum metric on $\C^n$, inherited by the $SU(2)$-invariant metric on $\C$, invariant under the diagonal action of $SU(2)$, and as we are on a compact space for every $\epsilon > 0$ there is a $\delta > 0$ such that $d((x_1, \dots, x_n),(y_1, \dots, y_n)) < \delta$ implies $|f(x_1) \dots f(x_n) - f(y_1) \dots f(y_n)| < \epsilon$. We fix $\epsilon$ giving us a $\delta$, and we let $U$ be the open $\delta$-ball around $0$,

\begin{flalign*}
& | (\nu + 1)^n  R_{\nu}( R_{\nu}^*(f)^n)(0) - f(0)^n|
\\ & = |(\nu + 1)^n \int_{\C^n} \frac{f(s_1) \dots f(s_n) (1 + s_1 \li{s_2})^{\nu} \dots (1 + s_{n-1} \li{s_n})^{\nu}}{(1 + |s_1|^2)^{\nu} \dots (1 + |s_n|^2)^{\nu} } d \iota(s_1) \dots d \iota(s_n)
\\ & - (\nu + 1)^n \int_{\C^n} \frac{f(0)^n (1 + s_1 \li{s_2})^{\nu} \dots (1 + s_{n-1} \li{s_n})^{\nu}}{(1 + |s_1|^2)^{\nu} \dots (1 + |s_n|^2)^{\nu} } d \iota(s_1) \dots d \iota(s_n)|
\\ & = | (\nu + 1)^n \int_{\C^n} \frac{(f(s_1) \dots f(s_n) - f(0)^n) (1 + s_1 \li{s_2})^{\nu} \dots (1 + s_{n-1} \li{s_n})^{\nu}}{(1 + |s_1|^2)^{\nu} \dots (1 + |s_n|^2)^{\nu}} \cdot
\\ & d \iota(s_1) \dots d \iota(s_n)|
\\ & \leq (\nu + 1)^n \int_{U} |f(s_1) \dots f(s_n) - f(0)^n| \cdot \left\lvert \frac{(1 + s_1 \li{s_2}) \dots (1 + s_{n-1} \li{s_n})}{(1 + |s_1|^2) \dots (1 + |s_n|^2)} \right\rvert^{\nu} \cdot
\\ & d \iota(s_1) \dots d \iota(s_n)
\\ & + 2 \nm{f^n}_{\infty} \int_{\C^n \backslash U} (\nu + 1)^{n} \left\lvert \frac{(1 + s_1 \li{s_2}) \dots (1 + s_{n-1} \li{s_n}) }{(1 + |s_1|^2) \dots (1 + |s_n|^2)} \right\rvert^{\nu} d\iota(s_1) \dots d \iota(s_n).
\end{flalign*}
It follows by the Cauchy-Schwarz inequality
$$ |1 + s_1 \li{s_2}| \leq (1 + |s_1|^2)^{\frac{1}{2}} (1 + |s_2|^2)^{\frac{1}{2}}$$
that
$$ \left\lvert \frac{(1 + s_1 \li{s_2}) \dots (1 + s_n \li{s_1}) }{(1 + |s_1|^2) \dots (1 + |s_n|^2)} \right\rvert^{\nu} \leq \frac{1}{(1 + |s_1|^2)^{\frac{\nu}{2}} (1 + |s_n|^2)^{\frac{\nu}{2}}} \leq 1.$$
We have equality in the first inequality if and only if all $s_i$ are equal; in the second inequality if and only if all $s_i$ are equal to $0$. In $\C^n \backslash U$ we have that there exists $r > 1$ such that
$$ \frac{1}{(1 + |s_1|^2)^{\frac{\nu}{2}} (1 + |s_n|^2)^{\frac{\nu}{2}}} \leq r < 1.$$
It follows that
$$(s_1, \dots, s_n) \mapsto (\nu + 1)^{n} \left\lvert \frac{(1 + s_1 \li{s_2}) \dots (1 + s_{n-1} \li{s_n}) }{(1 + |s_1|^2) \dots (1 + |s_n|^2)} \right\rvert^{\nu} < (\nu + 1) r^n$$
is bounded on $\C^n \backslash U$ and is going to $0$. This gives us by Lebesgue dominated convergence that $\int_{\C^n \backslash U} (\nu + 1)^{n} |\frac{(1 + s_1 \li{s_2}) \dots (1 + s_{n-1} \li{s_n}) }{(1 + |s_1|^2) \dots (1 + |s_n|^2)}|^{\nu} d\iota(s_1) \dots d \iota(s_n)$ is going to $0$ as $\nu$ is going to infinity. Now we study
\begin{flalign*}
& (\nu + 1)^n \times
\\ & \int_{U} | (f(s_1) \dots f(s_n) - f(0)^n) \frac{(1 + s_1 \li{s_2}) \dots (1 + s_{n-1} \li{s_n})}{(1 + |s_1|^2) \dots (1 + |s_n|^2)} |^{\nu} d \iota(s_1) \dots d \iota(s_n).
\end{flalign*}
We see that for $(s_1, \dots, s_n) \in U$ we have
$$|f(s_1) \dots f(s_n) - f(0)^n| < \epsilon.$$
As a consequence of Lemma \ref{boundedkern} the integral
$$(\nu + 1)^n \int_{U} \left\lvert \frac{(1 + s_1 \li{s_2}) \dots (1 + s_{n-1} \li{s_n})}{(1 + |s_1|^2) \dots (1 + |s_n|^2)} \right\rvert^{\nu} d \iota(s_1) \dots d \iota(s_n)$$
is bounded by some $R$. This bound holds for each neighbourhood $U$. Thus there is some $\nu_0$ such that for $\nu \geq \nu_0$
$$ | (\nu + 1)^n  R_{\nu}( R_{\nu}^*(f)^n)(0) - f(0)^n| < \epsilon + R \epsilon.$$
The result follows like in Proposition \ref{berezintoid}.
\end{proof}

We combine everything to get the following Theorem.

\begin{theorem}
\label{main_theorem}
For $f \in C(\mathbb{CP}^1)$ there is trace convergence

$$ \lim_{\nu \rightarrow \infty} \frac{1}{\nu} \Tr( \mathcal{T}_{\mu,k}^{\nu}(R^*_{\mu}(f))^n) = \int_{\C} E_{\mu,k}(f)^n d \iota(z).$$
\end{theorem}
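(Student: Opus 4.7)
The plan is to combine the factorization (\ref{quantumchannelsberezininv}) with the trace formula $\Tr(\Gamma) = (M+1)\int_{\C} R_M(\Gamma)(z)\,d\iota(z)$ and Remark \ref{munuktonu}, writing throughout $M := \mu+\nu-2k$. Setting $T := \mathcal{T}_{\mu,k}^{\nu}(R_\mu^*(f)) = R_M^*(g_\nu)$ with $g_\nu := B_M^{-1} E_{\mu,k}^{\nu}(f)$, one has
\begin{equation*}
\frac{1}{\nu}\Tr(T^n) = \frac{M+1}{\nu}\int_{\C} R_M(T^n)(z)\,d\iota(z),
\end{equation*}
so since $(M+1)/\nu \to 1$ and $\mathbb{CP}^1$ is compact, it suffices to prove that $R_M(T^n) \to E_{\mu,k}(f)^n$ uniformly on $\C$.

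The crucial move is to rescale by $M+1$. Proposition \ref{berezininverse} gives $((M+1)B_M)^{-1} E_{\mu,k}^{\nu}(f) \to E_{\mu,k}(f)$ uniformly, so $\tilde g_\nu := g_\nu/(M+1)$ converges uniformly to $E_{\mu,k}(f)$. By linearity, $T = (M+1) R_M^*(\tilde g_\nu)$, hence
\begin{equation*}
R_M(T^n) = (M+1)^n R_M\bigl((R_M^*(\tilde g_\nu))^n\bigr).
\end{equation*}
I would then split the error by the triangle inequality: the piece
\begin{equation*}
(M+1)^n R_M\bigl((R_M^*(E_{\mu,k}(f)))^n\bigr) - E_{\mu,k}(f)^n
\end{equation*}
vanishes uniformly by Proposition \ref{berezinxncpt} applied to the fixed continuous function $E_{\mu,k}(f)$, while the perturbation piece
\begin{equation*}
(M+1)^n R_M\bigl((R_M^*(\tilde g_\nu))^n\bigr) - (M+1)^n R_M\bigl((R_M^*(E_{\mu,k}(f)))^n\bigr)
\end{equation*}
is controlled using the explicit kernel representation from the proof of Proposition \ref{berezinxncpt}, the telescoping identity
\begin{equation*}
u(s_1)\cdots u(s_n) - v(s_1)\cdots v(s_n) = \sum_{i=1}^n v(s_1)\cdots v(s_{i-1})(u(s_i)-v(s_i))u(s_{i+1})\cdots u(s_n),
\end{equation*}
and the $\nu$-uniform bound $2^{2n}$ provided by Lemma \ref{boundedkern}. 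This yields a bound of order $n \cdot 2^{2n} \nm{\tilde g_\nu - E_{\mu,k}(f)}_{\infty} \cdot C^{n-1}$, where $C$ is any uniform upper bound for $\nm{\tilde g_\nu}_\infty$ and $\nm{E_{\mu,k}(f)}_\infty$; this tends to $0$.

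The main obstacle, and the reason both the rescaling and Lemma \ref{boundedkern} are needed, is that $g_\nu$ itself diverges as $\nu \to \infty$, so Proposition \ref{berezinxncpt} cannot be applied directly to $T = R_M^*(g_\nu)$. The rescaling absorbs the divergent factor into $(M+1)^n$, and the $\nu$-uniform kernel estimate is precisely what allows one to substitute the approximation $\tilde g_\nu$ for its limit $E_{\mu,k}(f)$ without disturbing the output. Once both pieces of the triangle inequality go to zero uniformly, $\int_\C R_M(T^n)\,d\iota \to \int_\C E_{\mu,k}(f)^n\,d\iota$ and combining with $(M+1)/\nu \to 1$ gives the theorem.
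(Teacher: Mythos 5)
Your proof is correct and follows the same skeleton as the paper's: both pass through Equation (\ref{quantumchannelsberezininv}), use Proposition \ref{berezininverse} to get $\tilde g_\nu \to E_{\mu,k}(f)$ uniformly, apply Proposition \ref{berezinxncpt} to the fixed limit function, control the perturbation term by a telescoping/triangle-inequality argument, and finish with the trace formula $\Tr(\Gamma)=(M+1)\int_{\C}R_M(\Gamma)\,d\iota$. The one place you genuinely diverge is the perturbation estimate. The paper stays at the operator level: it uses $\nm{R_M(A)}_\infty \leq \nm{A}_{B(\Hc_M)}$ together with the Toeplitz contraction $\nm{(M+1)R_M^*(h)} = \nm{T_h}\leq\nm{h}_\infty$, reducing the whole term to $\nm{T_{\tilde g_\nu}^n - T_{g}^n}\to 0$ (the same telescoping, but in operator norm). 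You instead return to the explicit integral kernel and reuse the $\nu$-uniform bound $I_n(\nu)\leq 2^{2n}$ of Lemma \ref{boundedkern}. Both work. The operator-norm route is slightly cleaner: it costs only $n\,C^{n-1}\nm{\tilde g_\nu - g}_\infty$ with no extra $2^{2n}$ factor, and it avoids a small point your version must address explicitly, namely that the kernel of $R_M(R_M^*(u)^n)$ at a general point $x$ is a closed chain with $x$ at both ends, while Lemma \ref{boundedkern} bounds only the open chain; one must first reduce to $x=0$ by $SU(2)$-invariance (as the proof of Proposition \ref{berezinxncpt} does), which is legitimate here because $\nm{\tilde g_\nu - g}_\infty$ and the constant $C$ are invariant under the group action. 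Your reading of the normalization is also right: the unrescaled $g_\nu = B_M^{-1}E_{\mu,k}^{\nu}(f)$ diverges, and absorbing the factor $M+1$ into $(M+1)^n$ is exactly what makes Proposition \ref{berezinxncpt} applicable.
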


\begin{proof}
We define
$$g_{\nu} := (\nu B_{\mu + \nu - 2k})^{-1} E_{\mu,k}^{\nu}(f)$$
and
$$g := E_{\mu,k}(f).$$
By Proposition \ref{berezininverse} we have that $g_{\nu}$ converges to $g$. Note that here we have exchanged $\nu + \mu - 2k + 1$ for $\nu$ as per Remark \ref{munuktonu}. We see
\begin{flalign*}
& \nm{\nu^n R_{\mu + \nu - 2k}( R_{\mu + \nu - 2k}^*(g_{\nu})^n) - g^n}_{\infty}
\\ & \leq \nm{\nu^n R_{\mu + \nu - 2k}( R_{\mu + \nu - 2k}^*(g_{\nu})^n) - \nu^n R_{\mu + \nu - 2k}( R_{\mu + \nu - 2k}^*(g)^n)}_{\infty} 
\\ & + \nm{\nu^n R_{\mu + \nu - 2k}( R_{\mu + \nu - 2k}^*(g)^n) - g^n}_{\infty}
\\ & \leq \nm{\nu^n R_{\mu + \nu - 2k}( R_{\mu + \nu - 2k}^*(g_{\nu})^n - R_{\mu + \nu - 2k}^*(g)^n)}_{\infty}
\\ & + \nm{\nu^n R_{\mu + \nu - 2k}( R_{\mu + \nu - 2k}^*(g)^n) - g^n}_{\infty}.
\end{flalign*}
By Proposition \ref{berezinxncpt} we have
$$ \lim_{\nu \rightarrow \infty} \nm{\nu^n R_{\mu + \nu - 2k}( R_{\mu + \nu - 2k}^*(g)^n) - g^n}_{\infty} = 0.$$
Now we see that
$$ |A(z,z)| = |\langle A, K_z \otimes \li{K_z} \rangle_{B(\Hc_{\nu})}| \leq \nm{A} \cdot \nm{K_z \otimes \li{K_z}} = \nm{A} (1 + |z|^2)^{\nu},$$
so that $\nm{R_{\nu}(A)}_{\infty} \leq \nm{A}_{B(\Hc_{\nu})}$. Furthermore, as $\nu R^*_{\mu + \nu - 2k}(f)$ is $c_{\nu} T_f$, $T_f$ being the Toeplitz operator and $c_{\nu} = \frac{\nu}{\nu + \mu - 2k +1}$, we see that
$$\nm{\nu R^*_{\mu + \nu - 2k}(f)}_{B(\Hc_{\nu})} \leq c_{\nu} \nm{f}_{\infty},$$
and $\lim_{\nu \rightarrow \infty} c_{\nu} = 1$. It follows that
\begin{flalign*}
& \lim_{\nu \rightarrow \infty} \nm{\nu^n R_{\mu + \nu - 2k}( R_{\mu + \nu - 2k}^*(g_{\nu})^n - R_{\mu + \nu - 2k}^*(g)^n)}_{\infty}
\\ & = \lim_{\nu \rightarrow \infty} \nm{R_{\mu + \nu - 2k}( T_{g_{\nu}}^n - T_g^n)}_{\infty} = 0.
\end{flalign*}
Thus we conclude that
$$ \lim_{\nu \rightarrow \infty} \nm{R_{\mu + \nu - 2k}( R_{\mu + \nu - 2k}^*(g_{\nu})^n) - g^n}_{\infty} = 0. $$
Now by Equation \ref{quantumchannelsberezininv}
\begin{flalign*}
& \frac{1}{\nu} \Tr(\mathcal{T}_{\mu,k}^{\nu}(R_{\mu}^*(f))^n) = \frac{1}{\nu} \Tr( \nu^n (R_{\mu + \nu - 2k}^* (\nu B_{\mu + \nu - 2k + 1})^{-1} E_{\mu,k}^{\nu}(f))^n)
\\ & = \nu^{n} c_{\nu}^{-1} \int_{\C}  R_{\mu + \nu - 2k} (R_{\mu + \nu - 2k}^*( (\nu B_{\mu + \nu - 2k})^{-1} E_{\mu,k}^{\nu}(f))^n)(z) d \iota(z)
\\ & = \nu^{n} c_{\nu}^{-1} \int_{\C}  R_{\mu + \nu - 2k} (R_{\mu + \nu - 2k}^*( g_{\nu})^n)(z) d \iota(z).
\end{flalign*}
As $d \iota$ is a bounded measure
\begin{flalign*}
& \lim_{\nu \rightarrow \infty} \nu^{n} c_{\nu}^{-1} \int_{\C}  R_{\mu + \nu - 2k} (R_{\mu + \nu - 2k}^*( g_{\nu})^n)(z) d \iota(z)
\\ & = \int_{\C} g^n d \iota(z) =  \int_{\C} E_{\mu,k}(f)^n d \iota(z).
\end{flalign*}
The result follows.
\end{proof}

Now we prove our main result.

\begin{theorem}
Let $\phi \in C([0,1])$. Then
$$ \lim_{\nu \rightarrow \infty} \frac{1}{\nu } \Tr(\phi(\mathcal{T}_{\mu,k}^{\nu}(R_{\mu}^*(f)))) = \int_{\C} \phi(E_{\mu,k}(f)) d \iota(z),$$
for $f \in C(\mathbb{CP}^1)$ such that $R_{\mu}^*(f)$ positive and $\int_{\C} f(z) d \iota(z) = 1$.
\end{theorem}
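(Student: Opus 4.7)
The plan is to reduce the statement to the polynomial case already established in Theorem~\ref{main_theorem} via Weierstrass's approximation theorem, and to estimate the two resulting error terms uniformly in $\nu$. The first step I would take is a spectral bound placing the problem on a common compact interval. Writing $A_\nu := \mathcal{T}_{\mu,k}^{\nu}(R_\mu^*(f))$, we have $A_\nu \geq 0$ by complete positivity together with $R_\mu^*(f) \geq 0$, and since $P_k$ is a partial isometry we get $\nm{A_\nu} \leq \nm{R_\mu^*(f) \otimes I} = \nm{R_\mu^*(f)}$. Because $R_\mu^*(f)$ is positive with $\Tr(R_\mu^*(f)) = \int_\C f d\iota = 1$, its operator norm is at most $1$, so $\sigma(A_\nu) \subseteq [0,1]$. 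Lemma~\ref{Ebounded} simultaneously gives $0 \leq E_{\mu,k}(f) \leq 1$ pointwise, so that $\phi \in C([0,1])$ can legitimately be applied on both sides of the desired identity.

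Next, given $\epsilon > 0$, Weierstrass supplies a polynomial $p$ with $\nm{\phi - p}_{C([0,1])} < \epsilon$. On the operator side the spectral theorem yields $\nm{\phi(A_\nu) - p(A_\nu)}_{\mathrm{op}} < \epsilon$, and since the trace of a self-adjoint operator is bounded by dimension times operator norm,
$$\left| \tfrac{1}{\nu} \Tr(\phi(A_\nu)) - \tfrac{1}{\nu} \Tr(p(A_\nu)) \right| \leq \tfrac{\mu + \nu - 2k + 1}{\nu}\, \epsilon,$$
which is at most $2\epsilon$ for all sufficiently large $\nu$. On the integral side, $\iota(\C) = 1$ gives the analogous bound $|\int_\C \phi(E_{\mu,k}(f)) d\iota - \int_\C p(E_{\mu,k}(f)) d\iota | < \epsilon$ directly. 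For $p(x) = \sum_{j=0}^N a_j x^j$, Theorem~\ref{main_theorem} handles every $j \geq 1$, while the $j=0$ contribution $\tfrac{1}{\nu} \Tr(a_0 I_{\Hc_{\mu+\nu-2k}}) = a_0 \cdot \tfrac{\mu+\nu-2k+1}{\nu}$ tends to $a_0 = \int_\C a_0 d\iota$. Linearity of trace and integral then gives
$$\lim_{\nu \to \infty} \tfrac{1}{\nu} \Tr(p(A_\nu)) = \int_\C p(E_{\mu,k}(f)) d\iota,$$
and an $\epsilon/3$ combination of the three estimates, followed by $\epsilon \downarrow 0$, finishes the proof.

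The only real subtlety I anticipate is the trace error estimate: it hinges on $\dim(\Hc_{\mu+\nu-2k}) = \mu + \nu - 2k + 1$ growing exactly linearly in $\nu$, so that dividing by $\nu$ produces a factor tending to $1$ and preserves the $\epsilon$-smallness uniformly. The positivity assumption on $R_\mu^*(f)$ and the normalization $\int f d\iota = 1$ are equally essential, as together they yield the uniform spectral bound $\nm{A_\nu} \leq 1$ that allows a single Weierstrass approximation on the fixed interval $[0,1]$ to work for all $\nu$; without any one of these ingredients, the reduction to polynomials would collapse.
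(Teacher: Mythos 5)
Your proposal is correct and follows essentially the same route as the paper: a Weierstrass approximation of $\phi$ by polynomials, the three-term error split with the trace term controlled by the eigenvalue bound $\nm{(\phi-p)(A_\nu)}\leq\nm{\phi-p}_\infty$ times $\frac{\mu+\nu-2k+1}{\nu}$, and Theorem~\ref{main_theorem} for the polynomial part. Your explicit treatment of the constant term $j=0$ and of the spectral containment $\sigma(A_\nu)\subseteq[0,1]$ is slightly more careful than the paper's (which relegates the latter to a preceding remark), but it is the same argument.
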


\begin{remark}
We observe that the condition $\int_{\C} f(z) d \iota(z) = 1$ is equivalent to the condition $\Tr(R_{\mu}^*(f)) = 1$. Note that the eigenvalues of $\mathcal{T}_{\mu,k}^{\nu}(R_{\mu}^*(f))$ are smaller than or equal to $\nm{R_{\mu}^*(f)}$ for all $\nu$. We get this as
$$ \nm{\mathcal{T}_{\mu,k}^{\nu}(R_{\mu}^*(f))} = \nm{P_k (R_{\mu}^*(f) \otimes ) P_k^*} \leq \nm{R_{\mu}^*(f)}.$$
Note also that
$$ \nm{R_{\mu}^*(f)} \leq \Tr(R_{\mu}^*(f)),$$
as $R_{\mu}^*(f)$ is positive and $\Tr(R_{\mu}^*(f)) = 1$. Thus the functional calculus $\phi(\mathcal{T}_{\mu,k}^{\nu}(R_{\mu}^*(f)))$ for $\phi \in C([0,1])$ is well defined. Note also that $\phi(E_{\mu,k}(f))$ is well defined by Lemma \ref{Ebounded}.
\end{remark}

\begin{proof}
We use Weierstrass' theorem on the density of polynomials in $C([0,1])$. Let $\phi \in C([0,1])$ and $\{p_n\}_n$ a sequence of polynomials such that
$$ \lim_{n \rightarrow \infty} \nm{\phi - p_n}_{\infty} = 0.$$
Observe
\begin{flalign*}
& |\frac{1}{\nu } \Tr(\phi(\mathcal{T}_{\mu,k}^{\nu}(R_{\mu}^*(f)))) - \int_{\C} \phi(E_{\mu,k}(f)) d \iota(z)|
\\ & \leq |\frac{1}{\nu } \Tr((\phi - p_n)(\mathcal{T}_{\mu,k}^{\nu}(R_{\mu}^*(f))))|
\\ & + |\frac{1}{\nu } \Tr(p_n(\mathcal{T}_{\mu,k}^{\nu}(R_{\mu}^*(f)))) - \int_{\C} p_n(E_{\mu,k}(f)) d \iota(z)|
\\ & + |\int_{\C} (p_n - \phi)(E_{\mu,k}(f)) d \iota(z)|.
\end{flalign*}
By Theorem \ref{main_theorem} we know
$$ \lim_{\nu \rightarrow \infty} \frac{1}{\nu } \Tr(p_n(\mathcal{T}_{\mu,k}^{\nu}(R_{\mu}^*(f)))) = \int_{\C} p_n(E_{\mu,k}(f)) d \iota(z).$$
Furthermore
$$ \lim_{n \rightarrow \infty} |\int_{\C} (p_n - \phi)(E_{\mu,k}(f)) d \iota(z)| \leq \lim_{n \rightarrow \infty} \int_{\C} \nm{p_n - \phi}_{\infty} d \iota(z)= 0.$$
Now let $\{\lambda_i\}_{i=1}^{\mu + \nu - 2k}$ be the eigenvalues of $(\phi - p_n)(\mathcal{T}_{\mu,k}^{\nu}(R_{\mu}^*(f)))$. We see that
$$ \nm{(\phi - p_n)(\mathcal{T}_{\mu,k}^{\nu}(R_{\mu}^*(f)))} \leq \nm{\phi - p_n}_{\infty},$$
and thus for all $i$
$$|\lambda_i| \leq \nm{\phi - p_n}_{\infty}.$$
We conclude that
$$ |\frac{1}{\nu } \Tr((\phi - p_n)(\mathcal{T}_{\mu,k}^{\nu}(R_{\mu}^*(f))))| \leq \frac{\nu + \mu - 2k + 1}{\nu} \nm{\phi - p_n}_{\infty},$$
and thus that
$$ \lim_{n \rightarrow \infty} |\frac{1}{\nu } \Tr((\phi - p_n)(\mathcal{T}_{\mu,k}^{\nu}(R_{\mu}^*(f))))| = 0.$$
This proves that
$$ \lim_{\nu \rightarrow \infty} \frac{1}{\nu } \Tr(\phi(\mathcal{T}_{\mu,k}^{\nu}(R_{\mu}^*(f)))) = \int_{\C} \phi(E_{\mu,k}(f)) d \iota(z).$$
\end{proof}

We study $E_{\mu,k}$ to find its spectral decomposition under the decomposition $L^2(\mathbb{CP}^1) = \bigoplus_{k=0}^{\infty} \widetilde{\Hc_{2k}}$. It is a sum of Berezin transforms and thus $SU(2)$-invariant. Schur's lemma says it will act by a constant on each of the irreducible subspaces $\widetilde{\Hc_{2m}} \subseteq L^2(\mathbb{CP}^1)$. We can compute this constant precisely.

\begin{theorem}
The operator $E_{\mu,k}$ acts as the constant
$$\frac{(-1)^k \binom{\mu}{k} (\mu !)^2 {}_3 F_2 (-k, -m - \mu - 1, m - \mu ; - \mu, - \mu ; 1)}{(\mu - m)! (\mu + m + 1)!}$$
on the space $\widetilde{\Hc_{2 m}} \subseteq C^{\infty}(\mathbb{CP}^1)$, where $0 \leq \mu \leq m$. If $m > \mu$ it acts as $0$.
\end{theorem}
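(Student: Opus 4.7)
My plan is to reduce the statement to a direct algebraic identification using the known scalar actions of the Berezin transforms on each irreducible subspace of $L^2(\mathbb{CP}^1)$.

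First I would invoke $SU(2)$-invariance and Schur's lemma: since $E_{\mu,k}$ is a linear combination of the $SU(2)$-invariant operators $B_{\mu-l}$, and since the decomposition (\ref{multfree}) is multiplicity free with each $\widetilde{\Hc_{2m}}$ irreducible, $E_{\mu,k}$ must act as a scalar on each $\widetilde{\Hc_{2m}}$. I would then dispose of the vanishing case first: if $m > \mu$ then $m > \mu - l$ for every $0 \leq l \leq k$, so Equation (\ref{Bnu0}) gives $B_{\mu-l}|_{\widetilde{\Hc_{2m}}} = 0$ for each $l$, and hence $E_{\mu,k}|_{\widetilde{\Hc_{2m}}} = 0$.

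For the nonvanishing case $m \leq \mu$, plugging Equation (\ref{Bnuconstant}) into Definition \ref{Emukdef} gives the explicit finite sum
$$E_{\mu,k}\big|_{\widetilde{\Hc_{2m}}} = \binom{\mu}{k} \sum_{l=0}^{\mu-m} (-1)^{k-l} \binom{k}{l} \frac{((\mu-l)!)^2}{(\mu-l+m+1)!\,(\mu-l-m)!},$$
where the upper limit $\mu - m$ reflects that $B_{\mu-l}$ annihilates $\widetilde{\Hc_{2m}}$ once $\mu - l < m$. The remaining task is to match this with the claimed hypergeometric expression. I would do this by rewriting each factorial ratio in terms of rising Pochhammer symbols via
$$(-k)_l = (-1)^l \frac{k!}{(k-l)!}, \qquad (-\mu)_l = (-1)^l \frac{\mu!}{(\mu-l)!},$$
$$(-m-\mu-1)_l = (-1)^l \frac{(m+\mu+1)!}{(m+\mu+1-l)!}, \qquad (m-\mu)_l = (-1)^l \frac{(\mu-m)!}{(\mu-m-l)!},$$
and then factoring out the prefactor $\frac{(-1)^k \binom{\mu}{k}(\mu!)^2}{(\mu-m)!(\mu+m+1)!}$. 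What remains inside the sum will then be precisely the generic term $\frac{(-k)_l(-m-\mu-1)_l(m-\mu)_l}{(-\mu)_l(-\mu)_l\, l!}$ of ${}_3F_2(-k,-m-\mu-1,m-\mu;-\mu,-\mu;1)$. Note that $(m-\mu)_l$ vanishes automatically for $l > \mu-m$ and $(-k)_l$ vanishes for $l > k$, so the ${}_3F_2$ terminates at $l = \min(k,\mu-m)$, matching the natural range of the sum above; this also sidesteps any question of convergence.

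I do not anticipate a genuine mathematical obstacle here, since no limit or analytic argument is needed—the whole calculation is finite and elementary once the scalar action of each $B_{\mu-l}$ is in hand. The only care required is sign bookkeeping: the three numerator Pochhammers contribute $(-1)^{3l}$ and the two denominator Pochhammers contribute $(-1)^{2l}$, combining to a net $(-1)^l$ inside the sum; together with the extracted $(-1)^k$ this reproduces the $(-1)^{k-l}$ present in the expression for $E_{\mu,k}|_{\widetilde{\Hc_{2m}}}$, and the remaining ratios of factorials line up as claimed.
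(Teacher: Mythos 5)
Your proposal is correct and follows essentially the same route as the paper: apply Schur's lemma to the multiplicity-free decomposition, substitute the known eigenvalues (\ref{Bnuconstant}), (\ref{Bnu0}) of each $B_{\mu-l}$ into Definition \ref{Emukdef}, and identify the resulting terminating sum as the stated ${}_3F_2$ via Pochhammer rewriting (the paper passes through falling factorials first, but the computation and sign bookkeeping are the same).
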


\begin{proof}
We recall the operator $E_{\mu,k}$ from Definition \ref{Emukdef}. From Equation (\ref{Bnu0}) and the definition of $E_{\mu,k}$ it is clear that it acts as $0$ when $m > \mu$. Now we study what happens for $0 \leq m \leq \mu$. By Equation (\ref{Bnuconstant}) $B_{\tau}$ acts as the constant $\frac{(\tau!)^2}{(\tau + m + 1)! \cdot (\tau - m)!}$ on $\Hc_{2m}$, and thus $E_{\mu,k}$ acts as the constant

$$ \binom{\mu}{k} \sum_{l = 0}^{\mu - m} (-1)^{k-l} \binom{k}{l} \frac{ ((\mu-l)!)^2}{(\mu - l + m + 1)! \cdot (\mu - l - m)!}.$$
This is also equal to

\begin{flalign*}
& \binom{\mu}{k} (-1)^k \sum_{l=0}^{\mu - m} (-1)^{l} \binom{k}{l} \frac{ ((\mu-l)!)^2}{(\mu - l + m + 1)! \cdot (\mu - l - m)!}
\\ & = \binom{\mu}{k} (-1)^k \frac{ (\mu!)^2}{(\mu + m + 1)! (\mu - m)!} \sum_{l=0}^{\mu - m} (-1)^{l} \frac{(\mu + m + 1)^l_- (\mu - m)^l_- (k)^l_-}{((\mu)^l_-)^2 l!}
\\ & = \binom{\mu}{k} (-1)^k \frac{ (\mu!)^2}{(\mu + m + 1)! (\mu - m)!} \sum_{l=0}^{\mu - m} \frac{(- \mu - m - 1)_l (- \mu + m)_l (-k)_l}{((-\mu)_l)^2 l!}
\\ & = \frac{(-1)^k \binom{\mu}{k} (\mu !)^2 {}_3 F_2 (-k, -m - \mu - 1, m - \mu ; - \mu, - \mu ; 1)}{(\mu - m)! (\mu + m + 1)!}
\end{flalign*}
This proves our Theorem.
\end{proof}

We note that we have used the eigenvalues of the Berezin transform $B_{\mu}$ to compute the eigenvalues of $E_{\mu,k}$. For the non-compact dual of $\mathbb{CP}^1$, namely the open unit disk and for general bounded symmetric domains, Unterberger and Upmeier \cite{UU} have found the eigenvalues of the Berezin transform. We shall study the corresponding questions about quantum channels for bounded symmetric domains in a future work.

\end{document}